\newtheorem{theorem}{Theorem}
\numberwithin{theorem}{section}
\newtheorem{proposition}[theorem]{Proposition}
\newtheorem{lemma}[theorem]{Lemma}
\newtheorem{remark}[theorem]{Remark}
\newtheorem{example}[theorem]{Example}
\newcommand{\R}{\mathbb{R}}
\newcommand{\Z}{\mathbb{Z}}
\newcommand{\C}{\mathscr{C}}
\newcommand{\SSS}{\mathscr{S}}
\newcommand{\A}{\mathscr{A}}
  \date{}
\title{Symmetric matrices, Catalan paths, and correlations}
\author{Bernd Sturmfels}
\address{}
\email{}
\author{Emmanuel Tsukerman}
\address{}
\email{}
\author{Lauren Williams}
\address{}
\email{}
\begin{document}

\begin{abstract}
Kenyon and Pemantle (2014) gave a
formula for the entries of a square matrix in terms
of connected principal and almost-principal minors.
Each entry is an explicit Laurent polynomial whose terms 
are the weights of
 domino tilings of a half Aztec diamond.    They conjectured an
analogue of this parametrization for symmetric matrices,
where the Laurent monomials are indexed by Catalan paths.
In this paper we prove the Kenyon-Pemantle conjecture,
and apply this to a statistics problem pioneered by Joe~(2006).
Correlation matrices are represented by  an explicit
bijection from the cube to the elliptope.
\end{abstract}
\maketitle

\section{Introduction}

In this  paper we present a formula for each entry of a 
symmetric $n \times n$ matrix $X = (x_{ij})$
as a Laurent polynomial in $\binom{n+1}{2}$
distinguished minors of $X$. Our result verifies a conjecture of
 Kenyon and Pemantle from \cite{KP2}. 
Let $I$ and $J$ be subsets of $[n] = \{1,2,\dots,n\}$ with
$|I| = |J|$.  
Let $X^J_I$ denote  the minor of $X$ with row indices $I$ and column indices $J$.
Here the indices in $I$ and $J$ are always taken in increasing order.
The following signed minors will be used:
$$ \,\,
\begin{matrix} 
& p_I  & := &  (-1)^{\lfloor\, |I|/2\, \rfloor} \cdot X_I^{I}  & \\
{\rm and} & a_{ij|I} & := &  (-1)^{\lceil\, |I|/2 \,\rceil} \cdot X^{jI}_{iI} & \,\,\,
\hbox{for} \,\,\, i,j \not \in I, \quad i \neq j.
\end{matrix}
$$
We call $p_I$ and $a_{ij|I}$ the \emph{principal}
and \emph{almost-principal} minors, respectively.
The minors $p_I$, $a_{ij|I}$ and $a_{ji|I}$ are called \emph{connected}
if $1\leq i<j \leq n$ and $I = \{i{+}1,i{+}2,\dots,j{-}2,j{-}1\}$.
The $1 {\times} 1$-minors $a_{ij} := a_{ij|\emptyset} = x_{ij}$ 
and $p_k = x_{kk}$ are connected when
$|i-j|=1$ and $ 1 \leq k \leq n$.

These definitions make sense for every $n \times n$ matrix $X$,
even if $X$ is not symmetric.
A general $n {\times} n$ matrix $X$ has $2^n$ principal minors, 
of which $\binom{n-2}{2}+n$ are connected. It also has
$n(n-1)2^{n-2}$ almost-principal minors, of which $n(n-1)$ are connected.
A symmetric $n {\times} n$ matrix has
$\binom{n}{2} 2^{n-2}$ distinct almost-principal minors
$a_{ij|I}$, of which $\binom{n}{2}$ are connected.

A \emph{Catalan path} $C$ is
a path in the $xy$-plane which starts at $(0,0)$ and ends on the $x$-axis,
always stays at or above the $x$-axis, and 
consists of steps
northeast $(1,1)$ and southeast $(1,-1)$.
We say that $C$ has \emph{size $n$} if its endpoints 
have distance $2n-2$ from each other.
Let $\C_n$ denote the set of Catalan paths of size $n$.
Its cardinality equals the Catalan number
$$ \qquad \quad |\C_n| \,= \, \frac{1}{n} \binom{2n-2}{n-1},
\quad \hbox{which is} \,\,
1, 2, 5, 14, 42, 132, 429, 1430, 4862 \,\,
\hbox{for $n=2,\ldots,10$}.
 $$
 
 Let $G_n$ denote the planar graph whose vertices 
  are the $\binom{n+1}{2}$ lattice points $(x,y)$ with
 $x \geq y \geq 0$ and $ x+y \leq 2n-2$ even,
 and  edges are northeast and southeast steps.
 Thus $\C_n$ consists of the paths from $(0,0)$ to $(2n-2,0)$ in $G_n$.
 We label the nodes and regions of $G_n$ as follows.
We assign label $j$ to the node $(2j-2,0)$,  label $a_{ij|I}$ to the node $(i+j-2,j-i)$,
and label $p_I$ to the region  below that node. 
 Thus,  connected principal and almost-principal minors of $X$
are identified in the graph $G_n$
with  regions  and  nodes strictly above the $x$-axis.
 
The \emph{weight} $W_\C(C)$ of a Catalan path $C$
is a Laurent monomial, derived from the drawing of $C$ 
 in the graph ${G}_n$. Its numerator is the product
of the labels $a_{ij|I}$ of the nodes  of $G_n$ that are local maxima
or local minima of $C$,
and its denominator is the product of the labels $p_I$ of the regions 
which are either immediately 
below a local maximum or immediately above a local minimum. 
Thus $W_\C(C)$ is a Laurent monomial of degree~$\leq 1$.
There is no lower bound on the degree;
for instance, $\frac{a_{13|2} a_{35|4} a_{57|6} a_{79|8}}{
p_2 p_3 p_4 p_5 p_6 p_7 p_8}$ has degree $-3$ and
appears for $ n = 9$.

\smallskip

The following result was
conjectured by Kenyon and Pemantle in
\cite[Conjecture 1]{KP2}.

\begin{theorem} \label{thm:main}
The entries of an $n \times n$ symmetric matrix $X = (x_{ij})$ satisfy the identity
\begin{equation}
\label{toprove}
 x_{ij} \,\,= \,\,\sum_C W_\C(C),
 \end{equation}
where the sum is over all Catalan paths $C$ between
node $i$ and node $j$ in $G_n$
\end{theorem}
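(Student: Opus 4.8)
The plan is to argue by induction on $n$. Observe first that for $i,j\le n-1$ the Catalan paths between nodes $i$ and $j$ in $G_n$ are exactly those in $G_{n-1}$: every step increases the $x$-coordinate, so such a path stays in the $x$-range $[2i-2,2j-2]$ and never meets the new nodes created by index $n$, while the labels $a_{kl|I},p_I$ that occur involve only rows and columns $\le n-1$ and are therefore unchanged. Thus the formula for those entries is inherited from the $(n-1)\times(n-1)$ case, and it remains to prove $x_{in}=\sum_C W_\C(C)$ for the last column, $1\le i<n$ (the diagonal entry is the normalization $x_{nn}=p_{\{n\}}$, realized by the empty path). Throughout I will use that $X$ is symmetric: the corner minors $X^{jI}_{iI}$ and $X^{iI}_{jI}$ are transposes of one another, so $a_{ij|I}=a_{ji|I}$, which is what lets a single path, rather than a pair of non-intersecting paths, carry the weight. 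The base case $j=i+1$ is immediate, since the unique path is a single peak at node $(2i-1,1)$, of weight $a_{i,i+1|\emptyset}/p_\emptyset=x_{i,i+1}$.

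The algebraic engine is the Desnanot--Jacobi (Dodgson) identity applied to the contiguous principal submatrix $M$ on rows and columns $\{i,i+1,\dots,j\}$, with $I=\{i+1,\dots,j-1\}$. Deleting the first and last rows and columns of $M$ in the four possible ways yields the three-term relation
\[
a_{ij|I}\,a_{ji|I}\;=\;p_{\{i\}\cup I}\,\,p_{I\cup\{j\}}\;+\;p_{\{i,\dots,j\}}\,\,p_{I}.
\]
I claim the parity signs $(-1)^{\lfloor |I|/2\rfloor}$ and $(-1)^{\lceil |I|/2\rceil}$ in the definitions of $p_I$ and $a_{ij|I}$ are chosen precisely so that Desnanot--Jacobi takes this sign-clean additive form; checking the sign is a short parity computation in $m=j-i$. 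This identity, together with its higher analogues obtained by condensing $M$ against larger symmetric borders, will be the only input from linear algebra.

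With these in hand I would run a first-return decomposition on the Catalan paths from node $i$ to node $n$. Splitting a path at its first return $k$ to the $x$-axis writes it as an elevated arch $i\to k$ followed by a path $k\to n$; since an interior axis contact at node $k$ is a local minimum lying on the axis, it contributes a factor $1/p_{\{k\}}$ to the weight, so concatenation is multiplicative up to this valley factor. The arches are themselves self-similar: an arch from $i$ to $k$ is a peak-capped copy, raised by one level, of a Catalan configuration, and raising the level by one shifts every interval label $I$ by bordering, turning height-$h$ data into almost-principal minors with $|I|=h-1$. Feeding the resulting recursion into the three-term identity and the induction hypothesis collapses the path sum to $x_{in}$; conceptually this is the statement that the $W_\C$ are exactly the monomials in the Flajolet--Viennot continued-fraction expansion of the $(i,n)$ resolvent-type entry, whose partial numerators and denominators are the connected minors.

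The main obstacle is closing this recursion cleanly: one must track how the valley factors $1/p_{\{k\}}$ interact with the peak factors $a_{kl|I}/p_I$ and with the bordering in the elevated arches, and confirm that the Desnanot--Jacobi cancellations occur with the correct parity signs at every level --- this bookkeeping, rather than any single deep step, is where the work lies. As an alternative route that sidesteps the recursion, one can instead start from the Kenyon--Pemantle domino-tiling formula for general square matrices and specialize to the symmetric case: setting $a_{ij|I}=a_{ji|I}$, the weight-generating sum over domino tilings of the half Aztec diamond should reorganize, via an explicit weight-preserving bijection, into the sum over Catalan paths, since the tilings of the half diamond are routed by a single monotone lattice path once the two orientations of a crossing are identified.
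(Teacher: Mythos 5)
Your proposal correctly isolates the one piece of linear algebra that makes the theorem work: the Desnanot--Jacobi relation on the contiguous block, which for a symmetric matrix reads $a_{ij|I}^2 = p_I\,p_{I\cup\{i,j\}} + p_{I\cup\{i\}}\,p_{I\cup\{j\}}$ with the stated signs (this is exactly the paper's relation \eqref{eq:relation}), and the reduction to the last column is fine. But the argument has a genuine gap exactly where you flag it: the first-return recursion is never closed. After splitting at the first return $k$ you are left with sums over \emph{elevated arches} from $i$ to $k$, and these are new quantities not covered by your induction hypothesis, which only speaks of full matrix entries $x_{ij}$. You would need to identify each arch sum as an explicit expression in connected minors, prove that identification by a separate induction on height, and then verify the resulting linear recursion $x_{in}=\sum_k A_{ik}\,p_{\{k\}}^{-1}\,(\text{path sum from }k\text{ to }n)$ --- none of which is done. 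The appeal to Flajolet--Viennot does not substitute for this: that theory yields continued-fraction expansions for weighted paths attached to \emph{tridiagonal} (Jacobi) data, and the assertion that the $(i,n)$ entry of a general symmetric matrix admits such an expansion with connected minors as partial numerators and denominators is essentially the theorem itself, not something citable.

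Your alternative route is the one the paper actually takes, but as stated it omits the essential mechanism. The passage from half-Aztec-diamond tilings to Catalan paths is \emph{not} a weight-preserving bijection: the paper first gives a weight-preserving bijection from tilings to Schr\"oder paths (Lemma \ref{bij} and Proposition \ref{PathWeight}, proved by propagating a local flip move), and then a projection $\pi$ from Schr\"oder paths to Catalan paths that is $2^m$-to-$1$ over a Catalan path with $m$ local minima, since each minimum can arise either from a genuine valley or from a horizontal step. The fiber of size $2^m$ sums to the single Catalan weight only because each valley is governed by $a^2 = bh + df$, i.e.\ by the quadratic relation above (Lemma \ref{KPlemma} and Proposition \ref{schroder_to_catalan_sum}); this is precisely where symmetry of $X$ enters, and without it the Catalan formula is false. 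So both branches of your proposal are missing the same step: a precise local identity showing how pairs of terms combine at each local minimum, applied once per minimum. As written, neither branch supplies it.
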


\begin{figure}[h]
\centering
\includegraphics[height=6cm]{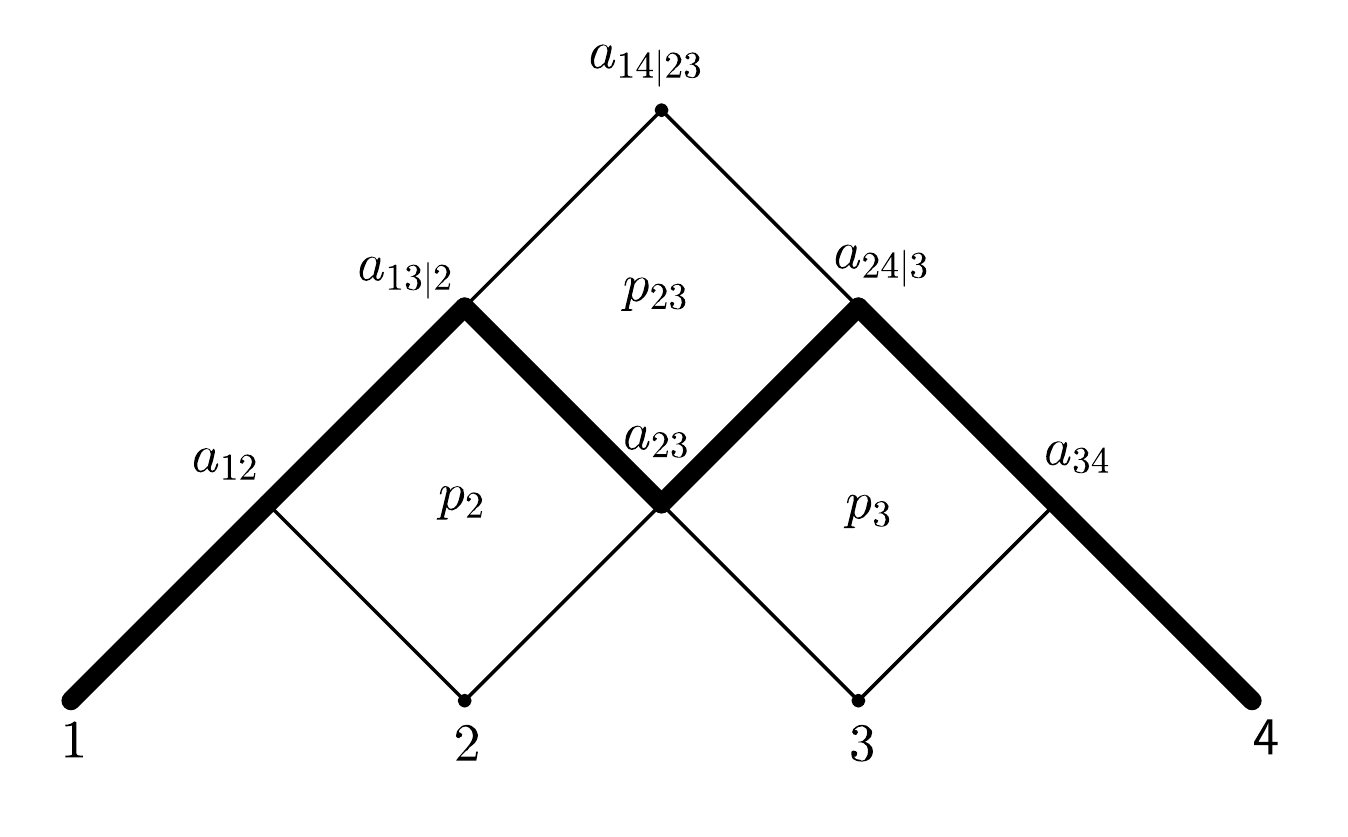}
\vspace{-0.25in}
\caption{A Catalan path $C$  in the planar graph ${G}_4$
with weight
$ \frac{a_{13|2} a_{23} a_{24|3}}{p_2 p_{23} p_3 }$.
} 
\label{fig:Catalan}
\end{figure}

For symmetric matrices of size $n = 4$, Theorem~\ref{thm:main} states the 
following formula:
\begin{equation}
\label{eq:Xmatrix}
 X \, = \,
\begin{pmatrix}\,
p_1\, & \,a_{12}\, & \frac{a_{13|2}}{p_2} + \frac{a_{12}a_{23} \phantom{|}}{p_2} &
 \frac{a_{14|23}}{p_{23}} + \frac{a_{12} a_{24|3}}{p_2 p_3} + \frac{
 a_{13|2} a_{34}}{p_2 p_3} + \frac{a_{12} a_{23} a_{34}\phantom{|}}{p_2 p_3} 
 + \frac{a_{13|2} a_{23} a_{24|3} }{
 p_2 p_{23} p_3}  \smallskip \\
\,* & p_2 & a_{23} & \frac{a_{24|3}}{p_3} + \frac{a_{23} a_{34} \phantom{|}}{p_3} \smallskip \\
\,*& * & p_3 & a_{34} \smallskip \\
\,*  & * & * & p_4
\end{pmatrix}
\end{equation}
The entry $x_{14}  = x_{41}$ is the sum of 
five Laurent monomials, one for each Catalan path 
from node $1$ to node $4$.
The last term 
$ \frac{a_{13|2} a_{23} a_{24|3}}{p_2  p_{23}p_3}$
equals $W_\C(C)$ for the path $C$
shown in Figure~\ref{fig:Catalan}. 

\smallskip

The proof of Theorem \ref{thm:main} 
is given in Section \ref{sec:proof}. 
We start in Section \ref{az} by reviewing
a theorem of Kenyon and Pemantle \cite{KP2} 
which expresses the entries of an arbitrary square matrix in terms of almost-principal and principal minors,
as a sum of Laurent monomials that are in bijection with domino tilings of a 
half Aztec diamond.  In Section \ref{sec:Schroder}, 
we give a bijection between these domino tilings and 
Schr\"oder paths, and restate their theorem using Schr\"oder paths.
We then prove our theorem
by constructing a projection from 
Schr\"oder paths to Catalan paths and applying the relation \eqref{eq:relation}
among minors of symmetric matrices.

In Section  \ref{sec5} we connect Theorem \ref{thm:main}
to an application in statistics, developed in work of
Joe, Kurowicka and Lewandowski \cite{MR2301633, MR2543081}.
Namely, we focus on symmetric matrices that are positive
definite and have all diagonal entries equal to $1$.
These are the {\em correlation matrices}, and they
form a convex set that is  known in optimization as the {\em elliptope} \cite{frgbook, LP}.
Our formula  yields an explicit bijection between
the elliptope and  the open cube $(-1,1)^{\binom{n}{2}}$.

\section{Square matrices and tilings of the half Aztec diamond \label{az}}

In this section we review the  Kenyon-Pemantle formula in \cite[Theorem 4.4]{KP2}.
The \emph{half Aztec diamond} $HD_n$ of order $n$ is the
union  of the unit squares whose vertices are in the~set
$$\{\,(a,b) \in \Z^2 \,\,:  \,\,  \, |a| \leq n, \,0 \leq b \leq n,\,|a|+|b| \leq n+1\,\}. $$
We label the boxes in the bottom row of $HD_n$ by the numbers $1$ through $2n$, from 
left to right.
We label certain lattice points of $HD_n$ by minors
 as follows. Fix $b \in [n]$. 
 The connected principal minors $p_I$ such that $|I|=b$ 
 are assigned to the lattice points $(a,b)$ with $a+b$ even.
 The connected almost-principal minors $a_{ij|I}$ with $i > j$ and $|I|=b-1$
  are assigned to the lattice points $(a,b)$ with $a+b$ odd.
  In both cases, the assignment is from left to right using
   the lexicographic order on  $I$.
   The case $n=4$ is shown in Figure \ref{fig:hd4}.

\begin{figure}[h]
\centering
\includegraphics[height=6.2cm]{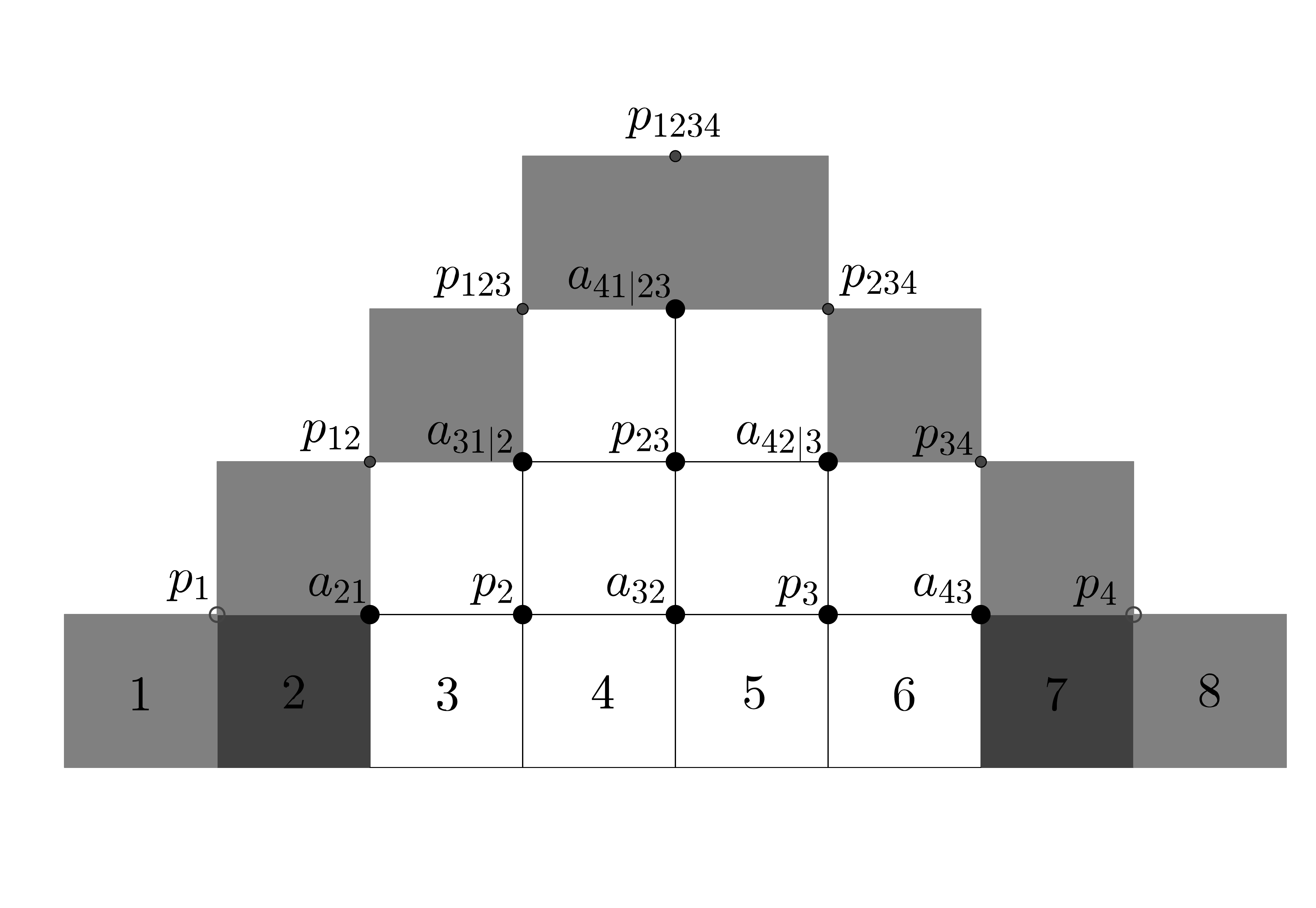}
\vspace{-0.2in}
\caption{The half Aztec diamond $HD_4$. 
The white boxes are to be tiled.}
\label{fig:hd4}
\end{figure}

Fix  integers $a$ and $b$ such that $a$ is even, $b$ is odd, and 
$1 {<} a {<} b {<} 2n$. We define the \emph{colored half Aztec diamond} $HD_n(a,b)$ 
by coloring the boxes of $HD_n$ black, grey, or white.
First color boxes $a$ and $b$ in the bottom row black.  
Let $L_a$ be the diagonal line of slope $1$ through box $a-1$,
and let $L_b$ be the line of slope $-1$ through box $b+1$.
If a box (or any part of it) lies to the left of $L_a$ or to the right of $L_b$, then color it grey.
All other boxes are 
white.
A \emph{domino 
tiling} (or simply a \emph{tiling}) 
of $HD_n(a,b)$ is a tiling of the white boxes  by
$1 {\times} 2$ and $2 {\times} 1$ rectangles.
Let $\A_n(a,b)$ denote the set of tilings of $HD_n(a,b)$.
Figure \ref{fig:aztecTiling2} 
shows the set $\A_4(2,7)$, i.e.~the six tilings of $HD_4(2,7)$, with lines 
$L_2$ and $L_7$ superimposed on the tilings.

\begin{figure}[h]
\centering
\includegraphics[width=0.90\textwidth]{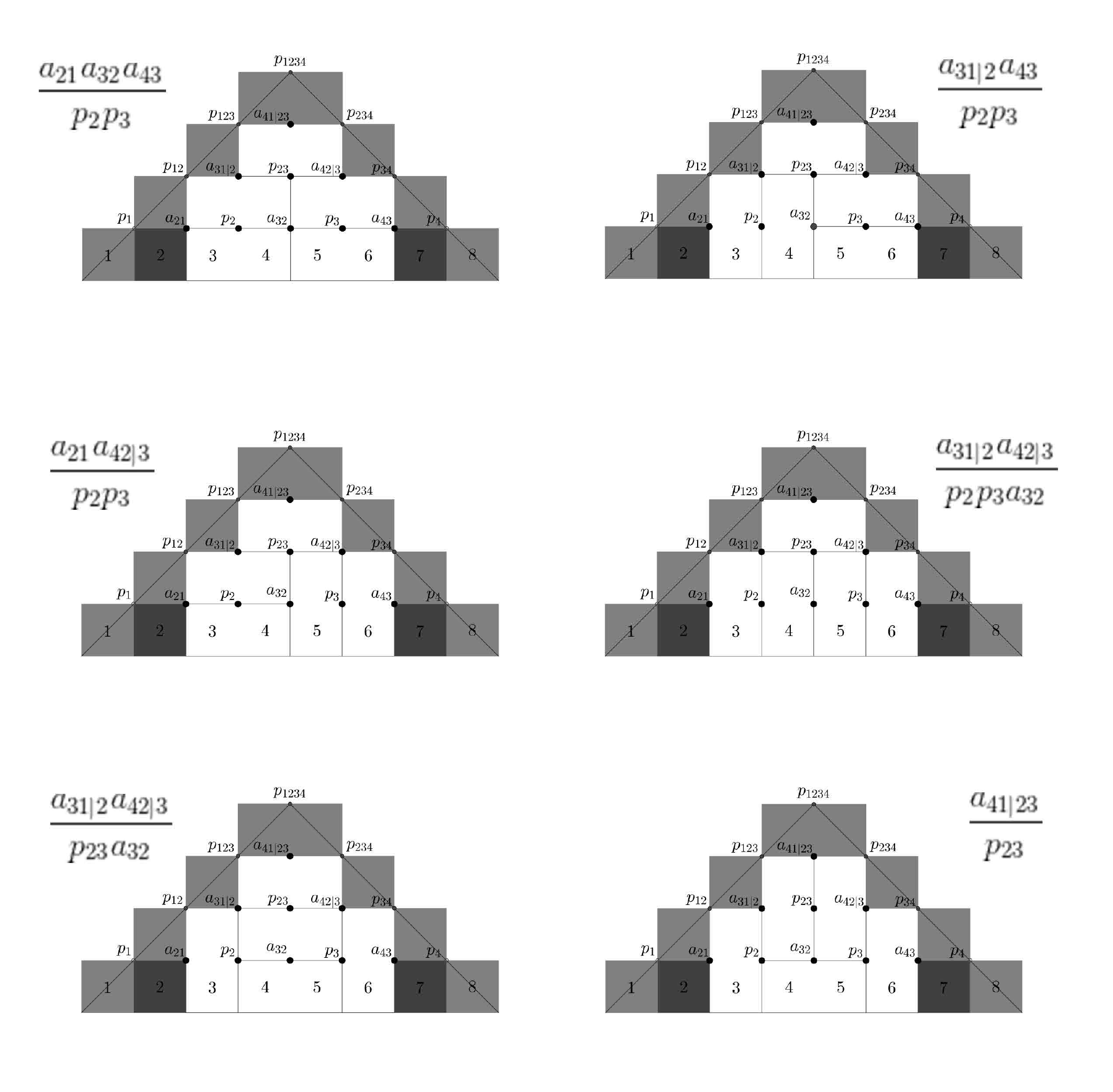}
\vspace{-0.38in}
\caption{
\label{fig:aztecTiling2}
The six tilings of 
the colored half Aztec diamond $HD_4(2,7)$.}
\end{figure}

Each tiling $T$ of the colored half Aztec diamond
$HD_n(a,b)$ gets a Laurent monomial weight, which we now define.
We regard $T$ as a simple graph whose nodes
are the lattice points of 
$HD_n$, and whose edges are induced 
by the edges of the rectangles in the tiling together with the 
edges of the unit squares outside the tiling.
An \emph{interior lattice point} of $HD_n(a,b)$ is a lattice point which lies strictly 
to the right of $L_a$ and strictly to the left of $L_b$.  The interior lattice points
that will concern us are shown in bold in Figures 
\ref{fig:hd4}
and \ref{fig:aztecTiling2}. 
 Each of these is 
labeled by  a variable
$v_{\ell}$ which is a connected principal or almost-principal minor.
The \emph{weight} $W_\A(T)$ of a tiling $T\in \A_n(a,b)$ is defined to be
the Laurent monomial
$$W_\A(T) \,\,:= \,\, \prod_\ell v_{\ell}^{d(\ell)-3},$$
where $\ell$ ranges over the interior lattice points of $HD_n(a,b)$ and 
$d(\ell)$ is the degree of $\ell$ in $T$. 

\begin{theorem}[Kenyon-Pemantle \cite{KP2}] \label{KP-theorem}
The entries of an $n \times n$ matrix $X = (x_{ij})$ satisfy
$$ x_{ij}  \quad = \sum_{T\in \A_n(2j, 2i-1)} W_\A(T)
\qquad {\rm for} \,\, i > j .$$
\end{theorem}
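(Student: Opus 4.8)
The plan is to prove this Kenyon–Pemantle formula (Theorem~\ref{KP-theorem}) by induction on $n$, using the combinatorics of how tilings of the colored half Aztec diamond decompose, together with a transfer-matrix / path interpretation. Let me sketch the main steps.

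First, I would reinterpret the weighted sum over tilings in terms of a product of local transfer operators. The key structural fact is that the degree exponent $d(\ell)-3$ at each interior lattice point encodes how the dominoes meet at $\ell$. Since each interior node has degree at most $4$ and at least $2$ in the induced graph $T$, the exponent $d(\ell)-3$ takes values in $\{-1,0,+1\}$, which is exactly why $W_\A(T)$ is a Laurent monomial. I would set up a reading of each tiling column-by-column (or diagonal-by-diagonal along the lines $L_a$, $L_b$), tracking the heights at which dominoes cross each vertical cut. This converts $\sum_{T} W_\A(T)$ into a matrix product, where the matrices record the contribution of each interior node and the admissible transitions between adjacent cuts.

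Second, I would establish the base case and the inductive step. For $n=2$ (or $n=3$) one checks the formula $x_{ij} = \sum_T W_\A(T)$ directly by enumerating the few tilings of $HD_n(2j,2i-1)$, matching against the explicit small-matrix inverse formulas. For the inductive step, the plan is to peel off the top row (height $b=n$) of the half Aztec diamond: the principal minor $p_{[n]}$ (or the relevant top-level minor) sits at the apex, and conditioning on how the tiling covers the top node relates $\A_n(a,b)$ to tilings of a smaller half Aztec diamond. The resulting recursion among the weighted sums should match the cofactor/Schur-complement recursion satisfied by the minors $X^J_I$ themselves. Concretely, one verifies that both sides of the claimed identity obey the same three-term relation coming from the Desnanot–Jacobi (Dodgson condensation) identity applied to the connected minors $p_I$ and $a_{ij|I}$.

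The main obstacle I expect is the bookkeeping that matches the tiling recursion to the algebraic minor recursion precisely, with the correct signs. The sign conventions $(-1)^{\lfloor |I|/2\rfloor}$ and $(-1)^{\lceil |I|/2\rceil}$ in the definitions of $p_I$ and $a_{ij|I}$ are chosen exactly so that the Dodgson condensation identities become sign-free in these variables, turning each into a positive combinatorial recurrence on tilings; verifying that these signs propagate correctly under the induction, and that the boundary/grey-box coloring induced by $L_a$ and $L_b$ shrinks compatibly, is where the work concentrates. A secondary difficulty is confirming that the degree-shift $d(\ell)-3$ is the unique local rule making the transfer product telescope into the single minor ratio $X^{jI}_{iI}/X^I_I$; I would verify this by checking the degree balance globally (the total exponent across all interior nodes must equal the degree of $x_{ij}$ as a Laurent monomial in the minors), which pins down the normalization and completes the argument.
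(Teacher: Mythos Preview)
The paper does not contain a proof of Theorem~\ref{KP-theorem}. This result is quoted verbatim from Kenyon and Pemantle \cite[Theorem~4.4]{KP2} and is used as a black box: Section~\ref{az} merely sets up the notation for $HD_n(a,b)$, $\A_n(a,b)$, and the weight $W_\A(T)$, states the theorem with attribution, and moves on. The paper's own work begins in Section~\ref{sec:Schroder}, where Theorem~\ref{KP-theorem} is taken as input and translated into the Schr\"oder-path formulation (Theorem~\ref{thm:Schroder}) via a weight-preserving bijection $\Phi$; that in turn feeds into the proof of Theorem~\ref{thm:main}. So there is no ``paper's own proof'' of this statement for your proposal to be compared against.

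As for the proposal itself: your outline---induction on $n$, peeling off the top layer of the half Aztec diamond, and matching the resulting tiling recursion to a Desnanot--Jacobi/Dodgson condensation identity among the connected minors---is indeed the strategy that Kenyon and Pemantle follow in \cite{KP2}, and the observation that the sign conventions on $p_I$ and $a_{ij|I}$ are designed to make the condensation relation sign-free is exactly right. The transfer-matrix reinterpretation in your first step is not needed, however: the argument in \cite{KP2} works directly with the octahedron recurrence (equivalently, Kuo condensation) on the tiling side and the Desnanot--Jacobi identity on the matrix side, checking that both satisfy the same recursion with the same boundary data. If you want to reconstruct the proof, that is the cleaner route; the column-by-column transfer product would only add bookkeeping without changing the substance.
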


Theorem 4.4 in \cite{KP2} also gives a similar formula for 
$x_{ij}$ with  $i<j$, but we omit that formula, as it is not needed here.

\begin{example}\label{ex:4by4} \rm
Figure \ref{fig:aztecTiling2} shows the six tilings of $HD_4(2,7)$ with
 their weights.  By Theorem~\ref{KP-theorem},
the upper right matrix entry for $n = 4$ is
the sum of these six Laurent monomials:
\begin{equation}
\label{eq:x41}
x_{41} \,\,= \,\,\frac{a_{21} a_{32} a_{43}}{p_2 p_3} + \frac{a_{31|2} a_{43}}{p_2 p_3} +
\frac{a_{21} a_{42|3}}{p_2 p_3} + \frac{a_{31|2} a_{42|3}}{p_2 p_3 a_{32}}+
\frac{a_{31|2} a_{42|3}}{ p_{23} a_{32}} + \frac{a_{41|23}}{p_{23}}.
\end{equation}
The full $4 \times 4$ matrix is shown 
on page 8 of \cite{KP2}, albeit with different notation.
\end{example}

\section{Square matrices and Schr\"oder paths}\label{sec:Schroder}

In this section we continue our discussion
of arbitrary  square matrices.
A \emph{Schr\"oder path} $S$ is 
a path in the $xy$-plane which starts  at $(0,0)$,
always stays at or above the $x$-axis, and 
consists of steps which are either
northeast $(1,1)$, southeast $(1,-1)$, or horizontal $(2,0)$.
A Schr\"oder path has \emph{order $n$} if it ends at $(2n-4,0)$.
Let $G_n'$ denote the planar graph whose
nodes are the lattice points $(x,y)$ with 
$0 \leq  y \leq x $ and $x+y \leq 2n-4 $ even,
with edges given by northeast, southeast and horizontal steps.
The set $\SSS_n$ of Schr\"oder paths of order $n$
is identified with the left-to-right paths in $G_n'$ from $(0,0)$
to $(2n-4,0)$.
The cardinality  of $ \SSS_n$ is the {\em Schr\"oder number},
which is given by the generating function
$$ 
\sum_{n=2}^\infty |\SSS_n| z^{n-2} \,\, = \,\,
\frac{1 - z - \sqrt{ 1 - 6z + z^2}}{2z} \, = \,
 1 + 2z + 6 z^2 + 22 z^3 + 90 z^4 + 394 z^5 + 
1806 z^6 + \cdots $$

The graph $G_n'$ is labeled by connected minors.
We assign $a_{ij|I}$ to  the node $(i{+}j{-}3,i{-}j{-}1)$ for $i>j$,
and we assign  $p_I$ to the triangle below that node. 
We refer to $(2i-2,0)$ as  node $i$.
Figure \ref{fig:Schroder}  shows the case $n=4$.
The six Schr\"oder paths in $\SSS_4$
are shown in  Figure \ref{fig:Schroder2}.

\begin{figure}[h]
\centering
\vspace{-0.15in}
\includegraphics[height=6.7cm]{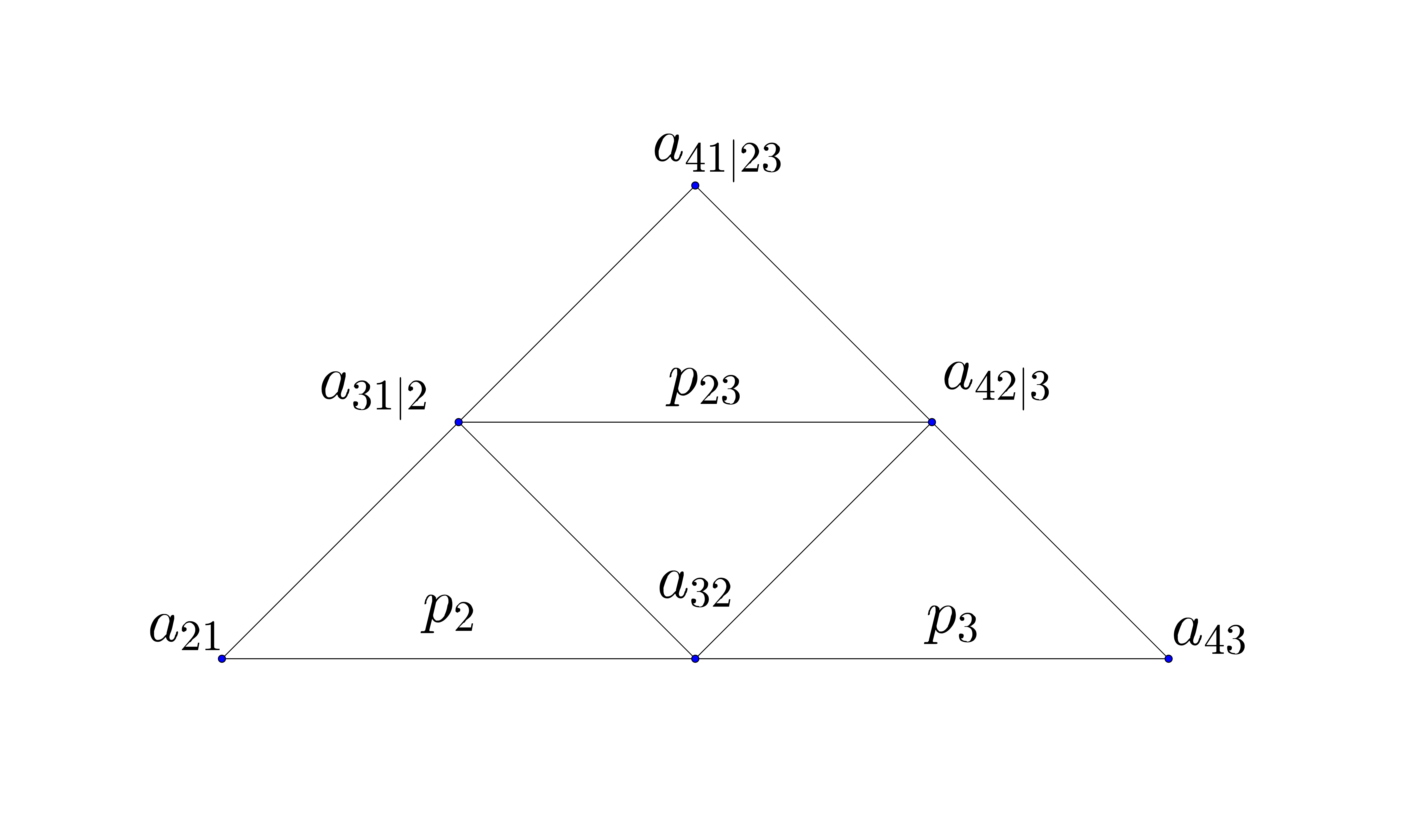}
\vspace{-0.5in}
\caption{The graph $G'_4$ encodes the Schr\"oder paths of order $4$.}
\label{fig:Schroder}
\end{figure}

We now define the \emph{weight} $W_\SSS(S)$ of a Schr\"oder path $S$ on $G'_n$.
We regard $S$ as a graph with vertices $V(S)$ and edges $E(S)$.
Given a Schr\"oder path $S$ on $G'_n$, 
we define the sets
$$ \begin{matrix}
\alpha(S)&= & \{v \in V(S) :v \text{ is a weak local maximum of }S\}, \qquad \qquad \qquad \qquad\,\, \\
\beta(S)&= & \{e \in E(S):e \text{ is immediately below a weak local minimum of }S\}, \\
\gamma(S)&= & \{e \in E(S):e \text{ is a horizontal edge of } S\},
\qquad \qquad \qquad \qquad \qquad \quad  \\
\delta(S)&= & \{v\in V(S):v \text{ is immediately below a horizontal edge of }S\}, \quad\quad\,\, \\
\epsilon(S)&= &\, \{e\in E(S) :e \text{ is immediately below a strict local maximum of }S\}, \\
\zeta(S)&= &\quad\,\,\, \{v \in V(S) :v \text{ is a strict local minimum (but not an endpoint) of }S \}.
\end{matrix} $$
Each of these is regarded as a monomial
by taking the product of all labels.
Then we define
\begin{align}\label{sweight}
W_\SSS(S)\,\,=\,\,\frac{\alpha(S)\beta(S)}{\gamma(S)\delta(S)\epsilon(S)\zeta(S)}.
\end{align}
Figure \ref{fig:Schroder2} shows 
 the six Schr\"oder paths for $n=4$, together with their weights.
  The sum of these weights is the Laurent polynomial in \eqref{eq:x41},
  which evaluates to the matrix entry $x_{41}$.
 
 The main result of this section is a reformulation of Theorem \ref{KP-theorem}
in terms of Schr\"oder paths.  
We write $\SSS_n(a,b)$ for the set of all Schr\"oder paths
from node $a$ to node $b$ in $G'_n$.

\begin{theorem}\label{thm:Schroder}
The entries of an  $n \times n$ matrix $X = (x_{ij})$ 
satisfy 
$$x_{ij} \,\,\,\, = \sum_{S \in \SSS_n(j,i-1)}  \!\! W_\SSS(S)
\qquad {\rm for} \,\,\, i > j .$$
\end{theorem}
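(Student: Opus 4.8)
The goal is to prove Theorem \ref{thm:Schroder}, which reformulates the Kenyon–Pemantle formula (Theorem \ref{KP-theorem}) in terms of Schröder paths rather than domino tilings of the colored half Aztec diamond. Since Theorem \ref{KP-theorem} already gives $x_{ij} = \sum_{T \in \A_n(2j,2i-1)} W_\A(T)$, the natural strategy is to construct a weight-preserving bijection
$$
\Phi \colon \A_n(2j,2i-1) \;\longrightarrow\; \SSS_n(j,i-1),
$$
and then simply transport the sum across $\Phi$. This reduces everything to two tasks: exhibiting the bijection on the combinatorial objects, and checking that $W_\A(T) = W_\SSS(\Phi(T))$ term by term.

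The plan is to build $\Phi$ from the ``de Bruijn / height-function'' correspondence that is standard for Aztec-diamond tilings. First I would recall that a domino tiling of the white region of $HD_n(a,b)$ can be read off as a family of non-intersecting lattice paths, or equivalently traced as a single boundary path between the two black boxes $a$ and $b$ in the bottom row. The key observation is that the black boxes sit at positions $a = 2j$ and $b = 2i-1$, whose midpoints and parities are exactly calibrated so that the interface path runs from node $j$ to node $i-1$ in the graph $G_n'$. I would make precise how each of the three allowed Schröder steps (northeast, southeast, horizontal) corresponds to a local tiling pattern: vertical dominoes, horizontal dominoes, and the square-block configurations that produce horizontal moves. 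Verifying that $\Phi$ is a bijection then amounts to checking that this local dictionary is reversible and that the boundary conditions forced by the grey/black coloring at $a$ and $b$ match the fixed endpoints $j$ and $i-1$ of a Schröder path.

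The heart of the matter is the weight identity, and this is where I expect the main obstacle to lie. The tiling weight $W_\A(T) = \prod_\ell v_\ell^{d(\ell)-3}$ records, for each interior lattice point $\ell$, the deviation of its degree $d(\ell)$ in the tiling graph from the ``generic'' value $3$. The Schröder weight $W_\SSS(S)$ in \eqref{sweight} instead records six distinct local features of the path $S$: weak local maxima $\alpha$, edges below weak local minima $\beta$, horizontal edges $\gamma$, vertices below horizontal edges $\delta$, edges below strict local maxima $\epsilon$, and strict local minima $\zeta$. I would prove the identity by matching these two bookkeeping schemes locally: at each node of $G_n'$ I must compute how the exponent $d(\ell) - 3$ on the corresponding minor $v_\ell$ in the tiling is produced, and show it agrees with the net contribution of that node (or the adjacent edge/triangle) to the numerator–denominator balance in \eqref{sweight}. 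The delicacy is that a single minor $v_\ell$ may be incremented or decremented by contributions from several of the six path-features at once, so the proof really is a careful case analysis over the possible local shapes of the path at and near each node, organized so that the degree defect $d(\ell)-3$ on each side of the dictionary is checked to coincide.

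Concretely, I would fix a node $\ell$ of $G_n'$ and enumerate the finitely many ways a Schröder path can pass through or near it (peak, valley, horizontal traversal, ascending, descending, or absent), translate each case into the corresponding local tiling configuration via $\Phi$, and tabulate both the degree $d(\ell)$ appearing on the Aztec side and the exponent of $v_\ell$ appearing on the Schröder side. Once this local table is shown to match in every case, summing over all nodes $\ell$ gives $W_\A(T) = W_\SSS(\Phi(T))$, and combining with Theorem \ref{KP-theorem} yields
$$
x_{ij} \;=\; \sum_{T \in \A_n(2j,2i-1)} W_\A(T) \;=\; \sum_{S \in \SSS_n(j,i-1)} W_\SSS(S)
\qquad \text{for } i > j,
$$
which is exactly Theorem \ref{thm:Schroder}. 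The only genuinely nonroutine step is organizing the local case analysis for the weight identity so that the two very different-looking definitions of weight are seen to encode the same monomial.
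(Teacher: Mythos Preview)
Your overall architecture matches the paper exactly: reduce Theorem~\ref{thm:Schroder} to Theorem~\ref{KP-theorem} via a weight-preserving bijection $\Phi\colon \A_n(2j,2i-1)\to\SSS_n(j,i-1)$, where $\Phi$ is the standard Randall--Stanley map reading a path off the tiling by local rules (the paper's Figure~\ref{fig:bijection}; Lemma~\ref{bij} simply cites \cite[Exercise~6.49]{Stanley}).

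The genuine divergence is in how you propose to prove $W_\A(T)=W_\SSS(\Phi(T))$. You plan a \emph{static} argument: fix each interior lattice point $\ell$, enumerate the local path shapes near it, and tabulate $d(\ell)-3$ against the net exponent of $v_\ell$ in~\eqref{sweight}. The paper instead argues \emph{dynamically}: it verifies the identity on the single base tiling $T_0$ of all horizontal dominoes (where both weights are visibly $\frac{a_{j+1,j}\cdots a_{i,i-1}}{p_{j+1}\cdots p_{i-1}}$), and then shows in Lemma~\ref{lem:move} that a flip changes $W_\A$ and $W_\SSS$ by the identical factor $\frac{bh}{df}$. Since flips connect all tilings of a simply connected region, this gives $W_\A(T)=W_\SSS(\Phi(T))$ for every $T$ by induction. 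Your approach is in principle sound, but the case analysis is substantial because the six ingredients $\alpha,\beta,\gamma,\delta,\epsilon,\zeta$ mix node labels and triangle labels, and a single $v_\ell$ can receive contributions from several of them depending on the path's behavior two steps away; the paper's flip argument collapses all of that to one base case and one $2\times2$ local move, which is considerably cleaner and less error-prone.
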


 We shall present a weight-preserving bijection
 $\Phi:\A_n(2j,2i-1) \rightarrow \SSS_n(j,i-1)$
between tilings and Schr\"oder paths.
Note that we can superimpose the graph $G'_n$ on the graph
$HD_n$ so that the labels (connected minors) match up.
When we do this, the vertex $j$ (respectively,  $i-1$) of $G'_n$ gets
identified with the top right corner
of the square $2j$ (respectively, the top left corner of the square
 $2i-1$) in $HD_n$.  We  draw a Schr\"oder path $\Phi(T)$
on top of a tiling~$T$,  as in Figure~\ref{fig:bijection}.
 We may then think of the path as 
an element of $\SSS_n(j,i-1)$.

\begin{figure}[h]
\begin{tikzpicture}
\filldraw[fill=blue!40!white, draw=black] (0,0) rectangle (2,4);
\node (b) at (0,2)[circle,fill=black]  {};
\node (t) at (2,4)[circle,fill=black]  {};
\draw[-latex] (b) -> (t);
\end{tikzpicture} 
\hspace{.5cm}
\begin{tikzpicture}
\filldraw[fill=blue!40!white, draw=black] (0,0) rectangle (2,4);
\node (b) at (0,4)[circle,fill=black]  {};
\node (t) at (2,2)[circle,fill=black]  {};
\draw[-latex] (b) -> (t);
\end{tikzpicture} 
\hspace{.5cm}
\begin{tikzpicture}
\filldraw[fill=blue!40!white, draw=black] (0,0) rectangle (4,2);
\node (b) at (0,2)[circle,fill=black]  {};
\node (t) at (4,2)[circle,fill=black]  {};
\draw[-latex] (b) -> (t);
\end{tikzpicture} 
\caption{\label{fig:bijection} How to construct a Schr\"oder path from a tiling.}
\end{figure}
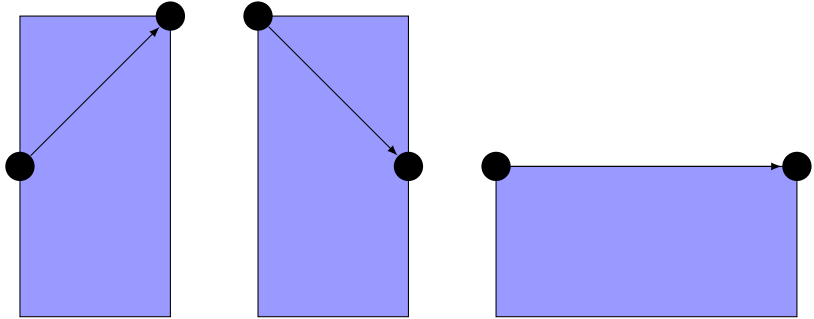

\begin{figure}[h]
\centering
\includegraphics[width=0.9 \textwidth]{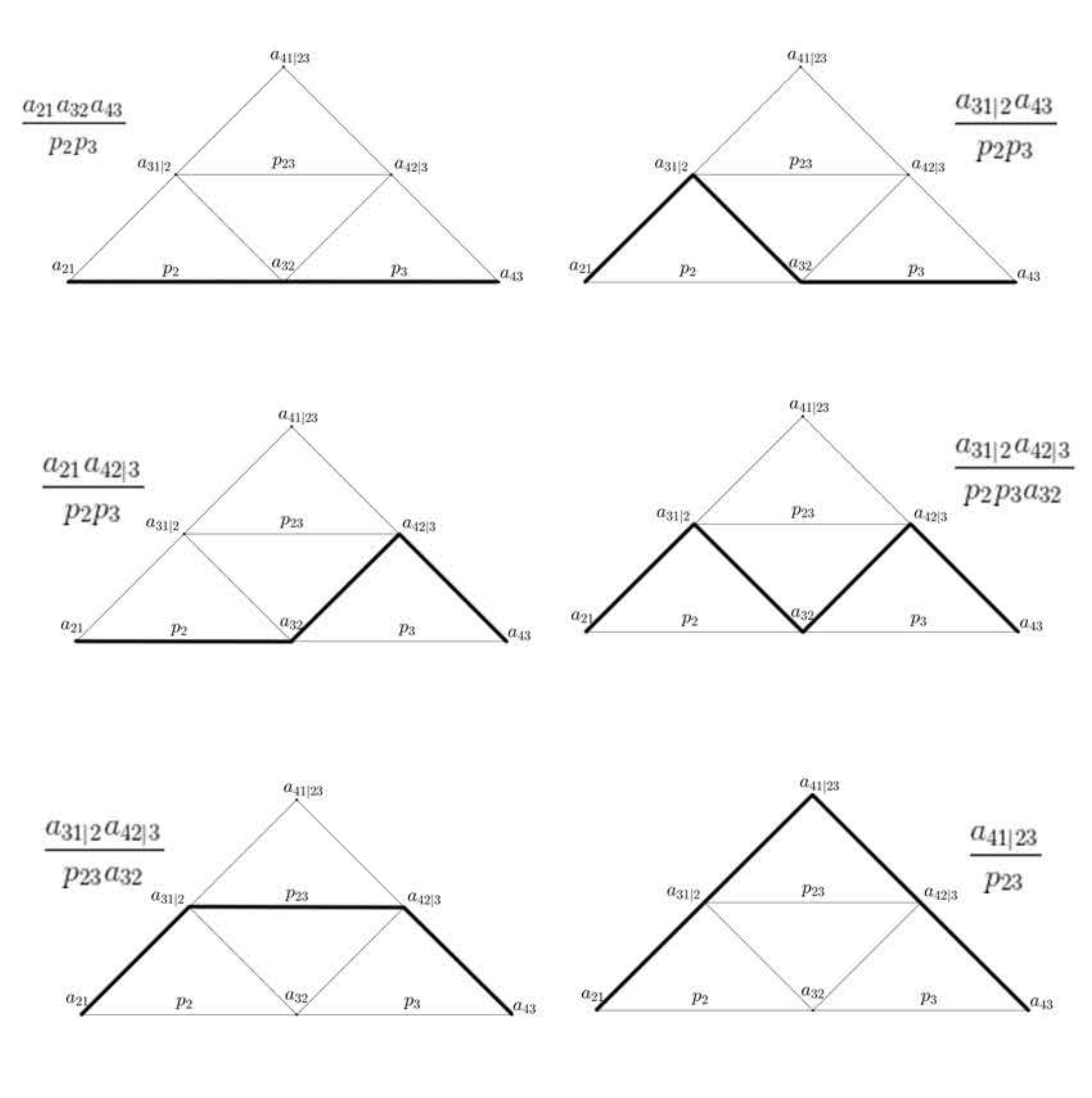}
\vspace{-0.26in}
\caption{The six Schr\"oder paths in $\SSS_4$ together with their weights.
\label{fig:Schroder2}}
\end{figure}

More formally, given $T \in \A_n(2j,2i-1)$,
the path    $\Phi(T) \in \SSS_n(j,i-1)$   is defined as follows.
Its starting point is the top right corner of square $2j$ in $HD_n(2j,2i-1)$.
We inductively add steps to $\Phi(T)$ depending on the local behavior
of the tiling, as shown in Figure~\ref{fig:bijection}.
  Let $x$ denote the endpoint of the path that we have built so far. 
  Then we proceed as follows:
\begin{itemize}
\item If there is a vertical tile to the east of $x$, 
then we add a northeast step to our path.
\item If there is a vertical tile to the southeast of $x$, such that $x$ is 
at its northwest corner, then we add a southeast step to our path.
\item If there is a horizontal tile to the southeast of $x$, then 
add an east step to our path.
\item If $x$ is already at the top left corner of square $2i-1$, then we stop.
\end{itemize}
The map $\Phi$ maps the six tilings in Figure 
\ref{fig:aztecTiling2}
to the six Schr\"oder paths in Figure 
\ref{fig:Schroder2}.

\begin{lemma}\label{bij}
The map $\Phi: \A_n(2j,2i-1) \to \SSS_n(j,i-1)$ is well-defined and is 
a bijection.
\end{lemma}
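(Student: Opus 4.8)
The plan is to prove well-definedness directly and then establish bijectivity by exhibiting an explicit inverse map $\Psi \colon \SSS_n(j,i{-}1) \to \A_n(2j,2i{-}1)$ which undoes $\Phi$. For well-definedness, I would track the endpoint $x$ of the partially built path and show that at every stage $x$ lies on the interface separating the white boxes already swept out by the path from those not yet reached, and that exactly one of the three local domino configurations in Figure~\ref{fig:bijection}, or else the termination rule, applies. The key local claim is exclusivity and exhaustivity: since the region is fully tiled and $x$ sits on this interface, the white box immediately to the east or southeast of $x$ is covered by a domino in precisely one of the three ways, so the next step is uniquely determined, with the initial step made available by the black box at position $2j$. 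I would check that the path stays weakly above the $x$-axis of $G'_n$, because the bottom row of $HD_n$ is identified with that axis and no continuing step is ever available below it, and that the path never crosses $L_a$ or $L_b$, because the boxes beyond those lines are grey and untiled, offering no domino to continue into. Finally, since each of the three steps strictly increases the $x$-coordinate, the path is monotone and must halt; the black box at position $2i{-}1$ forces the halt to occur exactly at its top left corner, so $\Phi(T)\in\SSS_n(j,i{-}1)$.

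For bijectivity I would construct $\Psi$ as follows. Given a Schr\"oder path $S$, read Figure~\ref{fig:bijection} in reverse: each northeast step prescribes a vertical domino to the east of its tail, each southeast step a vertical domino to its southeast, and each horizontal step a horizontal domino to its southeast. These prescribed dominoes occupy a connected strip of white boxes running alongside $S$ and separate the remaining white region into a piece above the path and a piece below it. The central claim is that each of these two pieces admits a \emph{unique} domino tiling, so that $\Psi(S)$ is a well-defined element of $\A_n(2j,2i{-}1)$. Granting this, I would verify $\Phi\circ\Psi=\mathrm{id}$ by observing that the dominoes laid along $S$ reproduce exactly the local configurations that $\Phi$ reads off, and $\Psi\circ\Phi=\mathrm{id}$ by observing that any tiling agreeing with $T$ along the path $\Phi(T)$ must, by the uniqueness of the forced tilings above and below, coincide with $T$ everywhere.

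The main obstacle is the uniqueness of the tilings of the two regions cut out by the path, equivalently the assertion that fixing the dominoes along the interface determines the whole tiling. I expect to prove this by a peeling induction: at each stage some uncovered white box on the boundary of the region has only a single uncovered neighbor, so the domino covering it is forced, and removing it leaves a smaller region of the same type. The delicate points are choosing a peeling order that always exposes such a forced box and checking that the black and grey boxes, sitting on the boundary, never create an untileable pocket or a genuine choice. The exclusivity and exhaustivity needed for well-definedness is a local instance of this same forcing phenomenon, so once the peeling argument is set up it should serve both halves of the proof.
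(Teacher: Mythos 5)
Your plan is essentially the paper's proof: the paper disposes of this lemma with a one-line citation to Exercise 6.49 of \cite{Stanley} (Dana Randall's bijection), and the argument you outline --- reading a step off the local domino configuration at the moving endpoint, and inverting by laying the prescribed dominoes along a Schr\"oder path and showing the two complementary regions are forcedly tileable --- is exactly the content of that solution, so you are supplying the details the paper omits rather than taking a different route. One point in your well-definedness step should be made explicit, because it is the single local fact the forward map rests on: the unit square to the southeast of the current endpoint $x$ can a priori be covered in \emph{four} ways, the fourth being as the right half of a horizontal domino, which corresponds to no step in Figure~\ref{fig:bijection}. It is excluded by a one-step induction: the square immediately to the west of it is always already occupied (by the black square $2j$ at the start, and thereafter by the domino just traversed), so no horizontal domino can reach it from the west; this also shows the relevant square is never grey on the $L_a$ side and that the process halts only at the black square $2i-1$. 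Your ``interface'' invariant contains this implicitly, but as written the exclusivity claim is asserted rather than derived from it. The peeling argument for unique tileability of the regions above and below the path does go through --- the slope-$\pm 1$ boundaries $L_a$, $L_b$ and the staircase top of $HD_n$ guarantee that, processed in a suitable diagonal order, each uncovered box has a unique available partner --- and this is how the cited solution proceeds, so once you execute the peeling your bijectivity argument is complete.
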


\begin{proof}
This is the solution to Exercise 6.49 in
\cite{Stanley}, based on an idea of Dana Randall.
\end{proof}


Proposition \ref{PathWeight} states that this bijection is weight-preserving. 
First, another lemma:

\begin{lemma}\label{lem:move}
The local move shown in the top of Figure \ref{fig:move1}
alters the weight of both the tiling and the corresponding Schr\"oder path
by the same factor: when passing from the left to the right, 
the exponents of $b$ and $h$   increase by $1$, 
while those of
 $d$ and $f$ decrease by~$1$. 

\begin{figure}
\begin{tikzpicture}[scale = 0.6]
\node[align=left] at (6.5,2) {$\longleftrightarrow$};
\filldraw[fill=blue!40!white, draw=black] (0,2) rectangle (4,4);
\filldraw[fill=blue!40!white, draw=black] (0,0) rectangle (4,2);
\node (b3) at (0,2)[circle,fill=black]  {};
\node (t3) at (4,2)[circle,fill=black]  {};
\draw[-latex] (b3) -> (t3);

\filldraw[fill=blue!40!white, draw=black] (11,0) rectangle (13,4);
\node (b) at (11,4)[circle,fill=black]  {};
\node (t) at (13,2)[circle,fill=black]  {};
\draw[-latex] (b) -> (t);
\filldraw[fill=blue!40!white, draw=black] (9,0) rectangle (11,4);
\node (b22) at (9,2)[circle,fill=black]  {};
\node (t2) at (11,4)[circle,fill=black]  {};
\draw[-latex] (b22) -> (t2);

\node (a1)[label=left:{$a$}] at (0,0)  {};
\node (b1)[label=left:{$b$}] at (2,0)  {};
\node (c1)[label=left:{$c$}] at (4,0)  {};
\node (d1)[label=left:{$d$}] at (0,2)  {};
\node (e1)[label=left:{$e$}] at (2,2)  {};
\node (f1)[label=left:{$f$}] at (4,2)  {};
\node (g1)[label=left:{$g$}] at (0,4)  {};
\node (h1)[label=left:{$h$}] at (2,4)  {};
\node (i1)[label=left:{$i$}] at (4,4)  {};

\node (a2)[label=left:{$a$}] at (9,0)  {};
\node (b2)[label=left:{$b$}] at (11,0)  {};
\node (c2)[label=left:{$c$}] at (13,0)  {};
\node (d2)[label=left:{$d$}] at (9,2)  {};
\node (e2)[label=left:{$e$}] at (11,2)  {};
\node (f2)[label=left:{$f$}] at (13,2)  {};
\node (g2)[label=left:{$g$}] at (9,4)  {};
\node (h2)[label=left:{$h$}] at (11,4)  {};
\node (i2)[label=left:{$i$}] at (13,4)  {};
\end{tikzpicture} 
\begin{tikzpicture}[scale = 0.6]
\node[align=left] at (6.5,2) {$\longleftrightarrow$};
\filldraw[fill=blue!40!white, draw=black] (0,2) rectangle (4,4);
\filldraw[fill=blue!40!white, draw=black] (0,0) rectangle (4,2);
\node (b3) at (0,4)[circle,fill=black]  {};
\node (t3) at (4,4)[circle,fill=black]  {};
\draw[-latex] (b3) -> (t3);

\filldraw[fill=blue!40!white, draw=black] (11,0) rectangle (13,4);
\node (b) at (11,2)[circle,fill=black]  {};
\node (t) at (13,4)[circle,fill=black]  {};
\draw[-latex] (b) -> (t);
\filldraw[fill=blue!40!white, draw=black] (9,0) rectangle (11,4);
\node (b22) at (9,4)[circle,fill=black]  {};
\node (t2) at (11,2)[circle,fill=black]  {};
\draw[-latex] (b22) -> (t2);

\node (a1)[label=left:{$a$}] at (0,0)  {};
\node (b1)[label=left:{$b$}] at (2,0)  {};
\node (c1)[label=left:{$c$}] at (4,0)  {};
\node (d1)[label=left:{$d$}] at (0,2)  {};
\node (e1)[label=left:{$e$}] at (2,2)  {};
\node (f1)[label=left:{$f$}] at (4,2)  {};
\node (g1)[label=left:{$g$}] at (0,4)  {};
\node (h1)[label=left:{$h$}] at (2,4)  {};
\node (i1)[label=left:{$i$}] at (4,4)  {};

\node (a2)[label=left:{$a$}] at (9,0)  {};
\node (b2)[label=left:{$b$}] at (11,0)  {};
\node (c2)[label=left:{$c$}] at (13,0)  {};
\node (d2)[label=left:{$d$}] at (9,2)  {};
\node (e2)[label=left:{$e$}] at (11,2)  {};
\node (f2)[label=left:{$f$}] at (13,2)  {};
\node (g2)[label=left:{$g$}] at (9,4)  {};
\node (h2)[label=left:{$h$}] at (11,4)  {};
\node (i2)[label=left:{$i$}] at (13,4)  {};
\end{tikzpicture} 
\caption{A flip of a tiling and the corresponding local move on Schr\"oder paths.
\label{fig:move1}}
\end{figure}
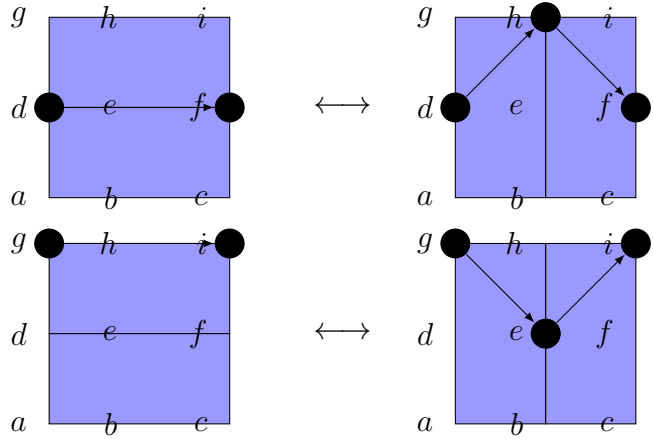
\end{lemma}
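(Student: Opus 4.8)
The plan is to verify the two asserted weight-changes separately---once for the tiling weight $W_\A$ and once for the Schr\"oder weight $W_\SSS$---and to check that each equals multiplication by $\tfrac{bh}{df}$. The flip in the top of Figure~\ref{fig:move1} rewrites the tiling only inside the single $2\times 2$ block bounded by the nine labelled lattice points $a,\dots,i$, so every edge of the tiling $T$ outside this block is common to the two tilings, and likewise every step of the associated Schr\"oder path outside the block is unchanged. Thus on both sides it suffices to track the purely local contributions and to argue that the surrounding data cancels.

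For the tiling side I would simply tabulate, for each of the nine lattice points, its degree in the graph $T$ arising from the two tiles inside the block, remembering that the edge interior to a domino is \emph{absent}. Passing from the two stacked horizontal dominoes to the two side-by-side vertical dominoes, the degrees of $b$ and $h$ rise from $2$ to $3$, the degrees of $d$ and $f$ drop from $3$ to $2$, and the remaining five degrees are unchanged. Since the outside edges are identical in the two tilings, these are exactly the \emph{global} degree changes, and because $W_\A(T)=\prod_\ell v_\ell^{d(\ell)-3}$ the weight is multiplied by $b^{+1}h^{+1}d^{-1}f^{-1}=\tfrac{bh}{df}$.

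For the Schr\"oder side the block replaces a single horizontal step $d\!\to\! f$ (left) by a peak $d\!\to\! h\!\to\! f$ (right), and I would run through the six sets $\alpha,\dots,\zeta$ recording which labels enter or leave. Three contributions are common to both pictures and cancel: the principal minor $e$ labelling the triangle resting on the horizontal edge (which lies in $\gamma$ on the left) is the \emph{same} triangle that sits under the new strict maximum $h$ (so it lies in $\epsilon$ on the right); and for each $v\in\{d,f\}$ the grid edge directly below $v$ lies in $\beta$ on the left exactly when it does on the right, since $v$ is a weak local minimum under the identical condition on its neighbour outside the block. What survives is precisely: the vertex $b$ below the horizontal edge leaves $\delta$, raising the exponent of $b$ by $1$; the strict maximum contributes $h$ to $\alpha$, raising the exponent of $h$ by $1$; and the exponents of $d$ and $f$ each fall by $1$. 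Collecting these gives the factor $\tfrac{bh}{df}$, matching the tiling computation.

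The step I expect to be most delicate is the bookkeeping at the entry and exit vertices $d$ and $f$, whose status as a weak maximum or a strict minimum depends on the slope of the adjacent step lying \emph{outside} the block. The resolving observation is that this outside slope is fixed by the unchanged surrounding tiles, hence is the same on both sides, and that the label $d$ can enter the weight \emph{only} through $\alpha$ and $\zeta$. On the left $d$ can be a weak maximum but never a strict minimum, because its partner $f$ sits at equal height; on the right $d$ can be a strict minimum but never a weak maximum, because $h$ lies strictly above it. Exactly one of the two events ``$d$ is a weak maximum on the left'' and ``$d$ is a strict minimum on the right'' occurs, so the indicator identity $[\text{weak max on left}]+[\text{strict min on right}]=1$ forces the exponent of $d$ to change by precisely $-1$, independently of the surrounding path; the symmetric argument applies to $f$. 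Once this context-independence is in hand, the local move is well-defined and the two weight-changes agree, as claimed.
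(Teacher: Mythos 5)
Your proof is correct, and on the tiling side it coincides with the paper's, which simply declares the degree count ``clear by inspection.'' On the Schr\"oder side the two arguments diverge in organization: the paper examines the local picture case by case, according to whether the path rises, stays level, or falls as it enters $d$ and as it leaves $f$, verifies one representative case (its Figure \ref{local_move}), and asserts that all other cases are similar. You instead isolate the only quantities that could depend on the ambient path---the memberships of $d$ and $f$ in $\alpha$, $\zeta$, and (via the triangles below them) $\beta$---and dispose of them uniformly: the $\beta$-contribution is governed by the weak-local-minimum condition, which is the same on both sides because the inside neighbour of $d$ has height $\geq$ that of $d$ in either picture; and the complementarity of ``weak maximum on the left'' and ``strict minimum on the right'' pins the exponent change of $d$, and likewise of $f$, at exactly $-1$ with no enumeration of slopes. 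This buys a complete proof where the paper leaves roughly eight cases to the reader, at the cost of having to justify that the label $d$ can enter the weight only through $\alpha$ and $\zeta$ (true, since a vertex lying on the path can never sit below a horizontal edge of the path, so $\delta$ is excluded). Two small points are worth making explicit: the indicator identity should be checked separately when $d$ or $f$ is an endpoint of the whole path (there is then no outside slope, but the identity still holds because $\zeta$ excludes endpoints while the level inside neighbour makes the endpoint a weak maximum on the left---this case genuinely occurs, e.g.\ for the first two paths in Figure \ref{fig:Schroder2}); and since Lemma \ref{KPlemma} later invokes this lemma for the valley-type move in the bottom row of Figure \ref{fig:move1}, where the roles of path vertices and face labels among $b,d,e,f,h$ are interchanged, the analogous bookkeeping for that configuration is also needed, though it follows by the same method.
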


\begin{proof}
The statement is clear by inspection for the tilings. Checking the assertion for Schr\"oder paths
is more complicated. We need to examine the various  cases of what the 
path looks like on the left and right of the square being modified.
In other words, we need to specify whether the path increases, stays flat or
decreases as it enters node $d$, and ditto for when it leaves node $f$.
 One such case  is seen in Figure \ref{local_move}.
 When we perform  that local move,
 the   exponents of $b$ and $h$ in \eqref{sweight} increase by $1$,
while the exponents of both $d$ and $f$ decrease by $1$.
 All other cases are similar.
\end{proof}

\begin{figure}[b]
\centering
 \begin{subfigure}[b]{0.45\textwidth}
        \centering
        \includegraphics[width=\textwidth]{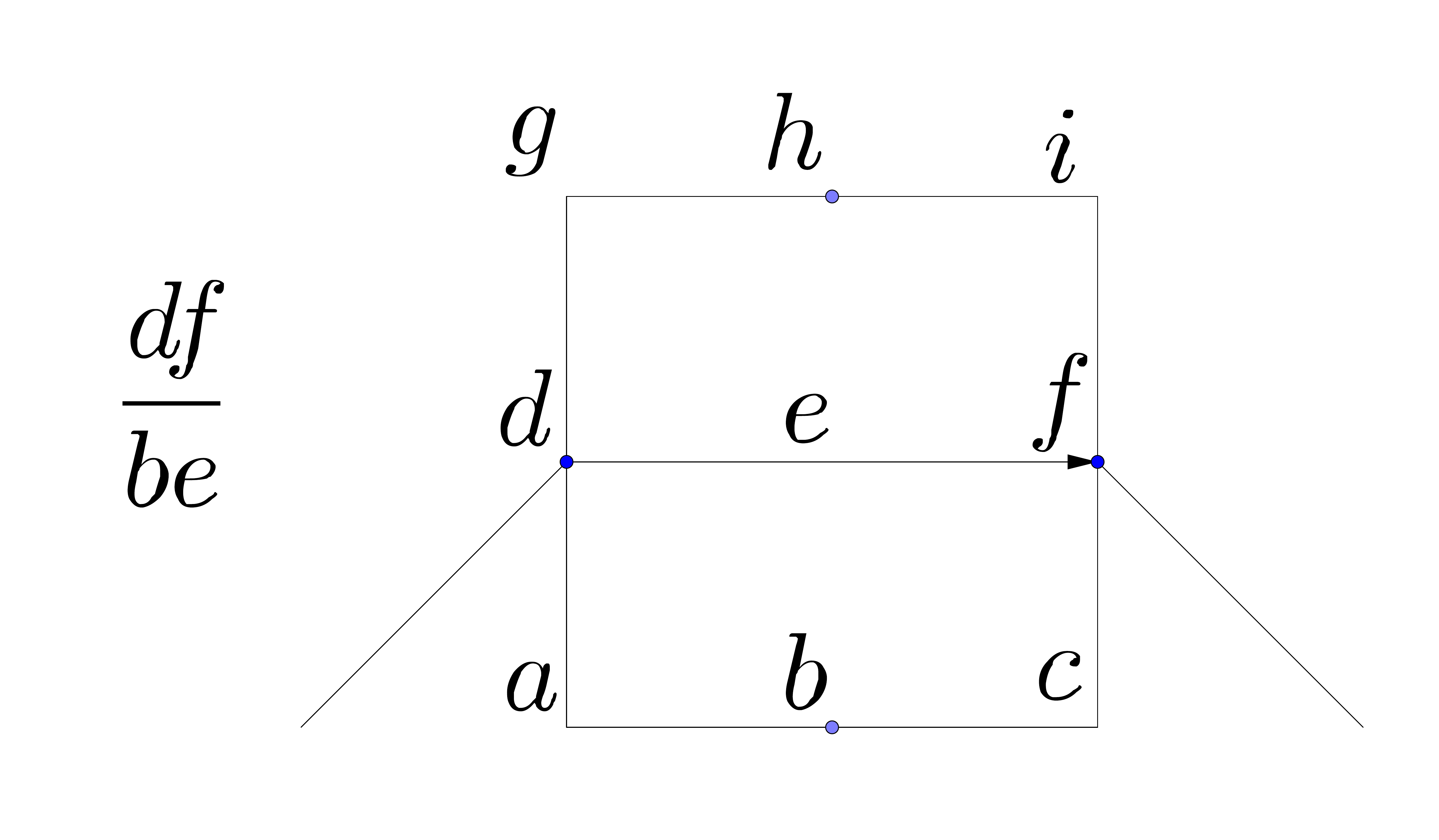}
    \end{subfigure}
    \hfill
    \begin{subfigure}[b]{0.45\textwidth}
        \centering
        \includegraphics[width=\textwidth]{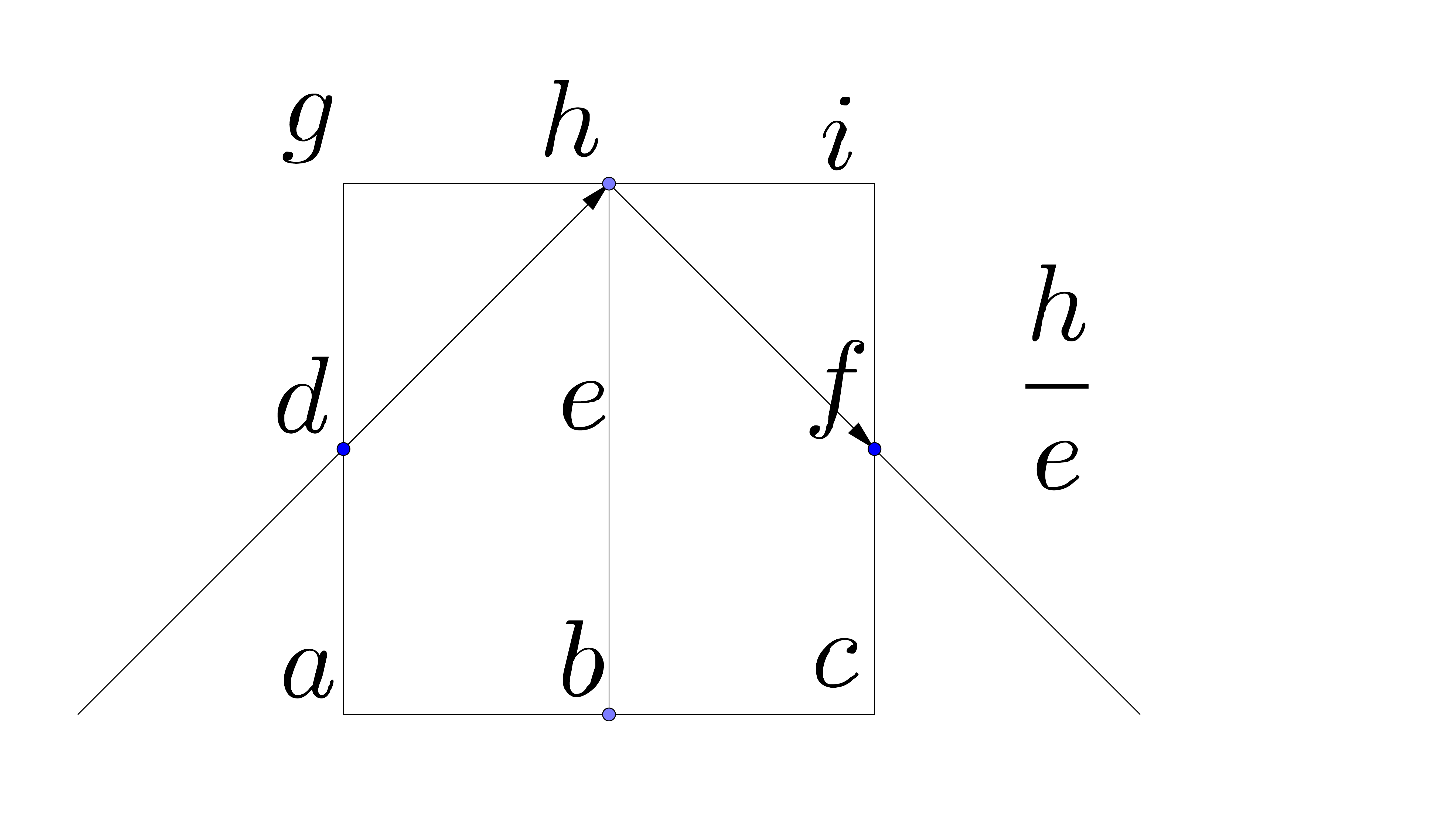}
    \end{subfigure}
    \vspace{-0.2in}
    \caption{\label{local_move}This local move multiplies the weight of the Schr\"{o}der path by 
    $\frac{b h}{d f}$. }
\end{figure}

\begin{proposition}\label{PathWeight}
If $T$ is a tiling in $\A_n(2j,2i-1)$,
where $i > j$, then
$W_\SSS(\Phi(T))=W_\A(T)$.
\end{proposition}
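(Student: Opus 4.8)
The plan is to establish $W_\SSS(\Phi(T)) = W_\A(T)$ by interpolating between the two weights through the local flip move isolated in Lemma~\ref{lem:move}. The key structural fact is that the set of tilings $\A_n(2j,2i-1)$ is connected under the $2\times 2$ flip shown in Figure~\ref{fig:move1}: any two tilings of the white region can be transformed into one another by a sequence of such flips. I would take as a base case one distinguished tiling---say the one containing no vertical dominoes stacked in a flippable $2\times 2$ block, i.e.\ a ``minimal'' or ``lowest'' tiling in the natural partial order on tilings (height functions)---and verify the weight identity directly for it. For this base tiling the correspondence between the tiling weight $W_\A(T) = \prod_\ell v_\ell^{d(\ell)-3}$ and the Schr\"oder weight \eqref{sweight} can be checked by a single explicit computation, reading off which interior lattice points have degree $\neq 3$ in $T$ and matching them against the $\alpha,\beta,\gamma,\delta,\epsilon,\zeta$ contributions of the corresponding path $\Phi(T)$.

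For the inductive step I would combine two ingredients. First, Lemma~\ref{lem:move} already tells us that a single flip changes the Schr\"oder weight by the factor $bh/df$. Second, I must show the same flip changes the \emph{tiling} weight $W_\A$ by the identical factor. This is the ``clear by inspection'' half of Lemma~\ref{lem:move}: performing the flip replaces two horizontal dominoes by two vertical ones (or vice versa), which changes the degrees $d(\ell)$ of exactly the four interior lattice points $b,d,f,h$ sitting at the midpoints of the edges of the $2\times 2$ block. Concretely, passing from the two-horizontal configuration to the two-vertical configuration raises $d(b)$ and $d(h)$ each by $1$ and lowers $d(d)$ and $d(f)$ each by $1$, so $W_\A$ is multiplied by $b^{+1}h^{+1}d^{-1}f^{-1} = bh/df$. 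Since $\Phi$ intertwines the tiling flip with the Schr\"oder local move (this is exactly what the two rows of Figure~\ref{fig:move1} assert, and it follows from the inductive definition of $\Phi$), both weights transform by the same factor under each flip.

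With the base case established and both weights shown to scale identically under every flip, the identity $W_\SSS(\Phi(T)) = W_\A(T)$ propagates to all of $\A_n(2j,2i-1)$ by induction on the number of flips needed to reach $T$ from the base tiling. Here I am using that $\Phi$ is a bijection (Lemma~\ref{bij}), so that the flip-connectivity of tilings transports to a corresponding connectivity of Schr\"oder paths, and no path is missed.

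I expect the main obstacle to be the base case bookkeeping rather than the inductive step. Matching $\prod_\ell v_\ell^{d(\ell)-3}$ against the six path-feature sets in \eqref{sweight} requires a careful dictionary: one must check that a weak local maximum of $\Phi(T)$ corresponds to an interior vertex of degree $4$ (contributing $v_\ell^{+1}$, hence a numerator factor), that horizontal edges and strict local minima correspond to the degree-$2$ vertices (contributing $v_\ell^{-1}$, the denominator factors $\gamma,\zeta$), and so on for the edge-indexed sets $\beta,\epsilon$ whose labels are the $p_I$ of the triangular regions. Verifying that \emph{every} interior lattice point of degree $\neq 3$ is accounted for---and with the correct sign of exponent---across the geometry of $HD_n(2j,2i-1)$ is the delicate part; the flip argument then guarantees the match persists globally. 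A cleaner alternative, which I would consider if the base-case casework becomes unwieldy, is to verify the local degree-change claim simultaneously for \emph{both} weights and invoke flip-connectivity to a tiling whose weight is manifestly $1$ on both sides, reducing the entire proposition to Lemma~\ref{lem:move} plus its tiling analogue.
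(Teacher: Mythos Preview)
Your approach is essentially identical to the paper's: both argue by flip-connectivity of tilings, use Lemma~\ref{lem:move} to show the two weights transform by the same factor under each flip, and verify the equality on a single base tiling. The paper takes as its base case precisely your ``lowest'' tiling---the all-horizontal tiling $T_0$, whose Schr\"oder image $\Phi(T_0)$ is the horizontal path---and writes down the common weight explicitly as $\frac{a_{j+1,j}a_{j+2,j+1}\cdots a_{i,i-1}}{p_{j+1}p_{j+2}\cdots p_{i-1}}$; your suggested ``weight manifestly $1$'' alternative does not exist in general, so the all-horizontal case is the natural anchor.
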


\begin{proof}
It is well known \cite{tilings} that two domino tilings of a simply connected region
can always be connected by a sequence of \emph{flips}, 
where a flip is the local move that 
switches two  horizontal tiles for two vertical tiles or vice-versa, as 
seen in Figure \ref{fig:move1}. 

Let $T_0$ be the tiling consisting only of horizontal tiles.
The corresponding Schr\"oder path
$\Phi(T_0)$ is  a horizontal path.  Here, 
the two objects have the same weight:
\begin{align}\label{horizontal_tiling_weight}
W_\SSS(\Phi(T_0)) \,=\, W_\A(T_0) \,= \,
\frac{a_{j+1,j} a_{j+2,j+1} a_{j+3,j+2} \dots a_{i,i-1}}{p_{j+1} p_{j+2} p_{j+3} \dots p_{i-1}}.
\end{align}
By Lemma \ref{lem:move}, if $W_\SSS(\Phi(T)) = W_\A(T)$ and $T'$ is obtained by a flip,
then $W_\SSS(\Phi(T')) = W_\A(T')$.  
Since the tilings in $\A_n(2j,2i{-}1)$ are connected by flips, the assertion follows.
 \end{proof}
 
 \begin{proof}[Proof of Theorem \ref {thm:Schroder}]
 This follows  from Theorem \ref{KP-theorem}, Lemma \ref{bij} and
 Proposition \ref{PathWeight}.
 \end{proof}

\section{Back to symmetric matrices}\label{sec:proof}

The strategy for proving Theorem \ref{thm:main} is to 
combine Theorem \ref{thm:Schroder}  with a projection
from Schr\"oder paths to Catalan paths. 
Let $S$ be any Schr\"oder path in $G'_n$.  The associated Catalan path
$\pi(S)$ in $G_n$ is defined by
\begin{itemize}
\item replacing each horizontal step in $S$ with a strict local minimum,  i.e.~a 
southeast step followed by a 
northeast step;
\item adding a northeast step at the beginning of $S$
and a southeast step at the end of $S$.
\end{itemize}
If $S$ starts at $i$ and ends at $j-1$ in $G'_n$ then
$\pi(S)$ starts at $i$ and ends at $j$ in $G_n$.
Figure~\ref{fig3:projection} shows how four of the  six Schr\"oder paths in 
$\SSS_4(1,3)$ map to four of the five Catalan paths in 
$\C_4(1,4)$.   The two  other Schr\"oder paths 
in Figure \ref{fig:Schroder2}
 map to the Catalan path in Figure~\ref{fig:Catalan}.

\begin{figure}[h]
\centering
\includegraphics[height=7.2in]{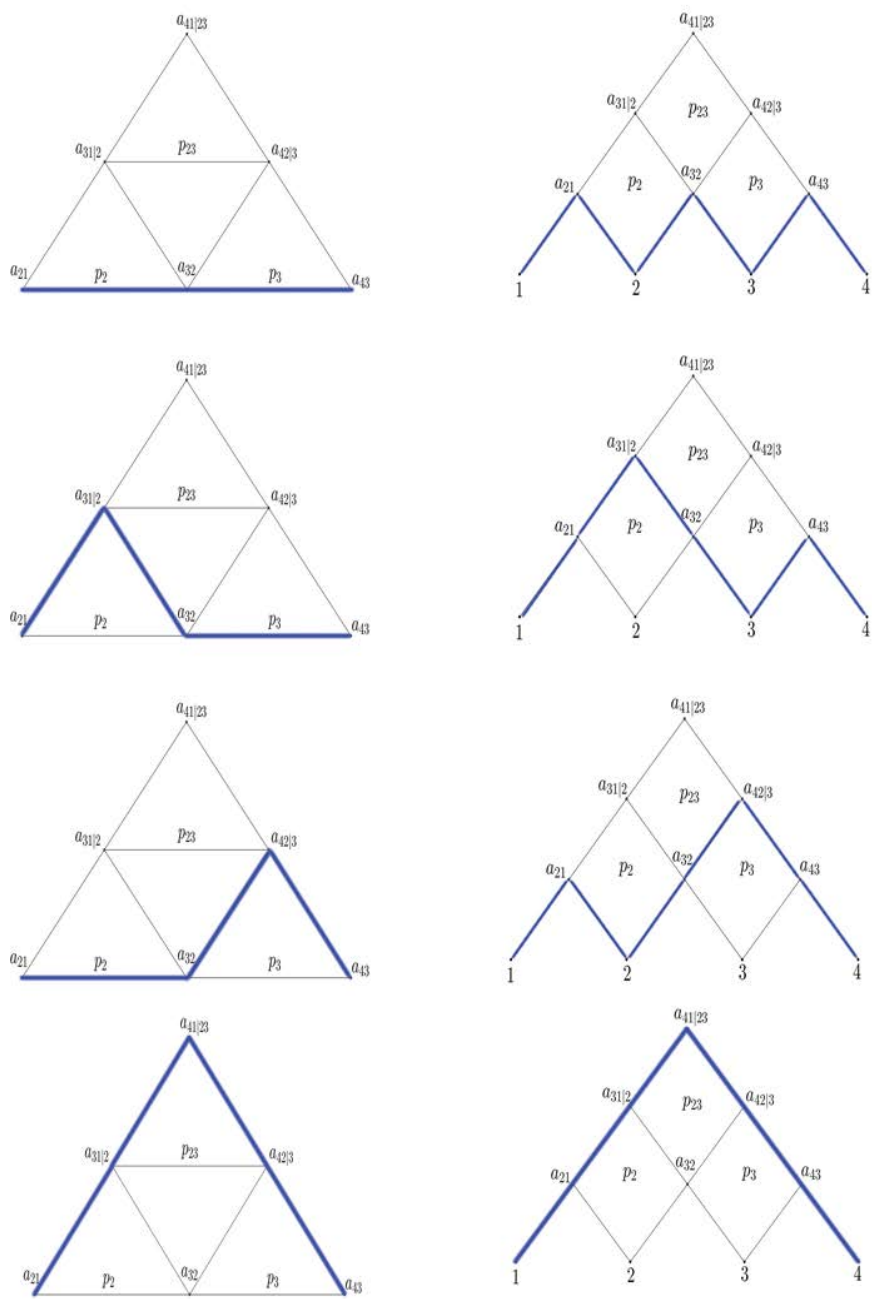}
\vspace{-0.3in}
\caption{The Schr\"oder paths (left) are projected to the Catalan paths (right).
\label{fig3:projection}}
\end{figure}


Theorem \ref{KP-theorem} is an immediate consequence of
Theorem \ref{thm:Schroder}  and the following proposition.

\begin{proposition} \label{schroder_to_catalan_sum}
The weight of a Catalan path is the sum of the weights of the Schr\"oder
paths in its preimage under the projection $\pi$, i.e. 
\begin{equation}\label{claim}
\sum_{S \in \pi^{-1}(C)} \!\!  W_{\SSS}(S) \,\,= \,\,W_{\C}(C).
\end{equation}
Here the labels of the paths come from a symmetric matrix, i.e.
$x_{ij} = x_{ji}$ for all $i$ and $j$.
\end{proposition}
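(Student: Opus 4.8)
The plan is to prove the identity \eqref{claim} by understanding how the weight $W_\SSS(S)$ of a Schr\"oder path transforms under the projection $\pi$, and then summing over all Schr\"oder paths in a single fiber $\pi^{-1}(C)$. The key structural observation is that $\pi$ acts on each horizontal step of $S$ \emph{independently}: a horizontal step is replaced by a strict local minimum (a down-step followed by an up-step), while the non-horizontal portions of $S$ are preserved (apart from the added initial northeast and terminal southeast steps, which contribute fixed boundary factors that do not depend on which Schr\"oder path we chose). Because the replacement is local, I expect the fiber $\pi^{-1}(C)$ to be parametrized by choices made independently at each strict local minimum of $C$: at such a vertex, the preimage either had a genuine strict local minimum already, or it had a horizontal step that $\pi$ expanded. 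The first step, therefore, is to make this parametrization precise and identify $\pi^{-1}(C)$ with a product of two-element choices, one for each strict local minimum of $C$.

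Second, I would track the weight contribution at each such vertex. Recall from \eqref{sweight} that $W_\SSS$ collects numerator labels $\alpha(S)\beta(S)$ and denominator labels $\gamma(S)\delta(S)\epsilon(S)\zeta(S)$, where $\gamma,\delta$ come specifically from horizontal edges and $\zeta$ from strict local minima. In the projected path $C$, the weight $W_\C(C)$ uses only node-labels $a_{ij|I}$ at local extrema (numerator) and region-labels $p_I$ below maxima or above minima (denominator). Comparing the two local weightings at a single strict local minimum of $C$, I expect the two Schr\"oder preimages at that site to contribute, respectively, the two terms of a two-term sum. This is exactly where the symmetry hypothesis $x_{ij}=x_{ji}$ must enter: the relation \eqref{eq:relation} among minors of a symmetric matrix (referenced in the introduction) should let me rewrite the sum of the two local contributions as a single monomial, namely the one prescribed by $W_\C$. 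Concretely, at a strict local minimum the two choices should produce terms whose sum collapses via this minor identity into the single $W_\C$ factor for that vertex.

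The cleanest way to assemble these local statements into the global identity \eqref{claim} is to argue multiplicatively. Since both the Schr\"oder weight and the fiber structure factor over the strict local minima of $C$ (with all other features of $C$ contributing a common factor shared by every $S\in\pi^{-1}(C)$), the sum $\sum_{S\in\pi^{-1}(C)}W_\SSS(S)$ distributes as a product over minima of the two-term local sums. Each local sum equals the corresponding $W_\C$ factor by the symmetric-matrix minor relation, and the product of all local factors together with the shared common factor reconstitutes $W_\C(C)$. I would verify the base case of this factorization carefully --- a $C$ with no strict interior local minima, whose fiber is a single Schr\"oder path --- to fix the common boundary factor, matching \eqref{horizontal_tiling_weight}.

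The main obstacle I anticipate is the bookkeeping in the local weight comparison: the sets $\alpha,\beta,\gamma,\delta,\epsilon,\zeta$ each pick up or lose labels in subtle ways depending on whether the steps flanking the expanded horizontal segment are ascending, descending, or flat, mirroring the case analysis already flagged in the proof of Lemma \ref{lem:move}. Establishing that the two preimage contributions at each minimum sum, via the symmetric-matrix relation \eqref{eq:relation}, to precisely the single $W_\C$ factor --- with exponents on the $p_I$ denominators matching on the nose --- is the delicate heart of the argument, and I would organize it as a short finite case check over the possible local shapes of $C$ at a strict local minimum.
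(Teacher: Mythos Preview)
Your proposal is correct and follows essentially the same route as the paper: parametrize $\pi^{-1}(C)$ by $\{0,1\}^m$ over the $m$ strict local minima of $C$, use Lemma~\ref{lem:move} together with the symmetric-minor relation~\eqref{eq:relation} to collapse each two-term local sum (this is packaged in the paper as Lemma~\ref{KPlemma}), and then aggregate multiplicatively to obtain $W_\C(C)$. One small correction: the reference to~\eqref{horizontal_tiling_weight} for your base case is off --- that formula concerns the all-horizontal Schr\"oder path, whose image under $\pi$ is the zigzag Catalan path with \emph{many} minima, not none; the paper instead anchors the computation at $S_{00\cdots 0}$ (the preimage with all minima preserved) and evaluates its weight directly via~\eqref{sweight}.
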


The proof will rely on equation \eqref{eq:relation} and Lemma \ref{KPlemma}. 
Using Muir's law of extensible minors,  we obtain  the following identity 
that expresses connected almost-principal minors
of a symmetric $n \times n$ matrix
in terms of connected principal minors:
\begin{equation}
\label{eq:relation} \qquad
a_{ij|I}^2 \,-\, p_I p_{I \cup \{i, j\}} - p_{I \cup \{i\}} p_{I \cup \{j\}} \,\,=\,\,0 , \qquad
2 \leq i < j \leq n{-}1, \,\,
I = \{i{+}1,\ldots,j{-}1\}.
\end{equation}

We now use this identity to prove the following claim.

\begin{lemma}\label{KPlemma}
Let $S'$ and $S$ be two Schr\"oder paths in $\SSS_n$ that
are related as shown in the 
bottom row of Figure \ref{fig:move1} (with $S'$  on
the left and $S$ on the right).  If the 
labels come from a symmetric $n \times n$ matrix, then
the resulting weights of these paths satisfy
\begin{equation}
\label{eq:WWW}
W_{\SSS}(S) \,+\, W_{\SSS}(S') \,\,= \,\, \frac{e^2}{bh} W_{\SSS}(S).
\end{equation}
\end{lemma}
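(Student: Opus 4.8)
The plan is to understand precisely how the weights $W_{\SSS}(S)$ and $W_{\SSS}(S')$ differ, and then to invoke the quadratic relation \eqref{eq:relation} to collapse their sum. First I would identify the two paths $S$ and $S'$ geometrically. Both agree outside the single square region of Figure~\ref{fig:move1}; inside, $S'$ uses a horizontal edge at height $2$ (the level of node $e$), whereas $S$ descends to node $e$ as a strict local minimum and climbs back up. Thus the node $e$ is a strict local minimum of $S$, and the horizontal edge of $S'$ sits immediately above node $e$. The labels $b,d,e,f,h$ are connected minors read off from the superimposed graph $G'_n$, and by the labeling conventions $e = a_{ij|I}$ is an almost-principal minor while $b,d,f,h$ are the neighboring principal minors $p_I$, $p_{I\cup\{i\}}$, $p_{I\cup\{j\}}$, $p_{I\cup\{i,j\}}$ in some order.

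Next I would compute the ratio $W_{\SSS}(S')/W_{\SSS}(S)$ directly from the definition \eqref{sweight}, by comparing which of the six sets $\alpha,\ldots,\zeta$ each node and edge belongs to in the two paths. The horizontal edge of $S'$ contributes to $\gamma$ (horizontal edges) in the denominator, and node $e$ below it contributes to $\delta$; meanwhile in $S$ node $e$ is a strict local minimum, so it contributes to $\zeta$ in the denominator, and the edges flanking it enter $\beta$ or $\epsilon$. Carefully bookkeeping these memberships should show that passing from $S$ to $S'$ multiplies the weight by exactly $\frac{bh}{e^2}$, i.e.
\begin{equation}
\label{eq:ratiostep}
W_{\SSS}(S') \,=\, \frac{bh}{e^2}\,W_{\SSS}(S).
\end{equation}
This is the computational heart of the lemma and where I would be most careful: the claimed identity \eqref{eq:WWW} rearranges to $W_{\SSS}(S') = \bigl(\tfrac{e^2}{bh}-1\bigr)W_{\SSS}(S)$, so the ratio I must verify is $\frac{e^2}{bh}-1$, not simply $\frac{bh}{e^2}$. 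The discrepancy tells me that the relation \eqref{eq:relation} must intervene: writing $e^2 = a_{ij|I}^2 = p_I p_{I\cup\{i,j\}} + p_{I\cup\{i\}}p_{I\cup\{j\}}$, and recognizing that the product of the four principal labels splits as $bh = p_I p_{I\cup\{i,j\}}$ (the two minors attached to the horizontal level) while the remaining factor $p_{I\cup\{i\}}p_{I\cup\{j\}}$ accounts precisely for the extra term, I would convert the naive geometric ratio into the asserted algebraic one.

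Concretely, the argument runs as follows. From the path geometry I establish the clean relation \eqref{eq:ratiostep} purely combinatorially (no symmetry of $X$ needed yet). Then I substitute the symmetric-matrix identity: since $e^2 = bh + p_{I\cup\{i\}}p_{I\cup\{j\}}$ by \eqref{eq:relation}, the factor $\frac{bh}{e^2}$ that I computed is not itself equal to $\frac{e^2}{bh}-1$, so I must instead add $W_{\SSS}(S)$ to both sides and rewrite. Starting from $W_{\SSS}(S)+W_{\SSS}(S') = \bigl(1+\tfrac{bh}{e^2}\bigr)W_{\SSS}(S)$ is the wrong normalization, which signals that the intended combinatorial ratio is actually $\frac{e^2}{bh}-1 = \frac{p_{I\cup\{i\}}p_{I\cup\{j\}}}{bh}$; hence the correct combinatorial step to prove is $W_{\SSS}(S') = \frac{p_{I\cup\{i\}}p_{I\cup\{j\}}}{bh}W_{\SSS}(S)$, and the identity \eqref{eq:relation} converts the numerator $p_{I\cup\{i\}}p_{I\cup\{j\}}$ into $e^2-bh$, giving $W_{\SSS}(S)+W_{\SSS}(S') = \frac{e^2}{bh}W_{\SSS}(S)$ after clearing. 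The main obstacle is therefore the bookkeeping in the first step: correctly tracking, across all the local configurations of how the path enters node $d$ and leaves node $f$ (exactly the cases flagged in the proof of Lemma~\ref{lem:move}), which vertices and edges land in $\alpha,\beta,\gamma,\delta,\epsilon,\zeta$, so that the ratio $W_{\SSS}(S')/W_{\SSS}(S)$ comes out as $\frac{p_{I\cup\{i\}}p_{I\cup\{j\}}}{bh}$ uniformly. Once that combinatorial ratio is pinned down, the algebraic finish via \eqref{eq:relation} is immediate.
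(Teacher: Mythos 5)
Your final plan coincides in structure with the paper's proof: one shows that the weight ratio $W_{\SSS}(S')/W_{\SSS}(S)$ equals the quotient $\frac{df}{bh}=\frac{p_{I\cup\{i\}}\,p_{I\cup\{j\}}}{p_I\,p_{I\cup\{i,j\}}}$ of principal-minor labels, and then the symmetric-matrix relation \eqref{eq:relation}, in the form $e^2=bh+df$, yields $W_{\SSS}(S)+W_{\SSS}(S')=\bigl(1+\frac{df}{bh}\bigr)W_{\SSS}(S)=\frac{e^2}{bh}\,W_{\SSS}(S)$. In the paper the ratio is not recomputed: it is exactly the content of Lemma \ref{lem:move} (the local move multiplies the weight by $\frac{bh}{df}$ when passing from the horizontal configuration to the one with the strict local minimum), so the proof of the lemma itself is two lines.

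The genuine problem is how you arrive at the ratio. Your displayed claim $W_{\SSS}(S')=\frac{bh}{e^2}\,W_{\SSS}(S)$ is false, and the bookkeeping you outline does not produce it: the label $e$ appears once in the denominator of \emph{both} weights (in $W_{\SSS}(S)$ via $\zeta$, as a strict local minimum, and in $W_{\SSS}(S')$ via $\delta$, as the node immediately below the horizontal edge), so $e$ cancels from the ratio altogether; what actually changes is that $b$ (the face below $e$) enters the numerator of $W_{\SSS}(S)$ through $\beta$, $h$ (the face above the horizontal edge) enters the denominator of $W_{\SSS}(S')$ through $\gamma$, and $d,f$ (the faces below the two upper corners) enter or leave $\epsilon$ depending on how the path arrives at and departs from those corners. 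You then recover the correct ratio $\frac{df}{bh}$ only by solving the target identity for it, and you explicitly defer the forward verification as ``the main obstacle.'' That verification is the entire nontrivial content of the lemma beyond the one-line use of \eqref{eq:relation}; it is precisely Lemma \ref{lem:move}, which you should either invoke directly or actually carry out via the case analysis you mention. Until that is done, the argument assumes what it needs to prove.
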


\begin{proof}
The label $e$ of the local minimum in $S$ is an almost-principal minor,
while $b,h,d,f$ are principal minors.
By  \eqref{eq:relation}, it satisfies $\,e^2 = bh+df$, and hence
$  \frac{e^2}{bh} = 1 + \frac{df}{bh}$. By Lemma~\ref{lem:move}, we have 
$W_{\SSS}(S') = \frac{df}{bh} W_{\SSS}(S) .$
This implies $\,\frac{e^2}{bh} W_{\SSS}(S)\,=\,
W_{\SSS}(S) + W_{\SSS}(S')$.
\end{proof}

\begin{example} \label{ex:sixtofive} \rm
Let $S'$ and $S$ be the fourth and fifth Schr\"oder paths
in Figure \ref{fig:Schroder2}, with labels 
given by
a symmetric $4 {\times} 4$ matrix. Using the identity
$a_{23} = p_{23} + p_2 p_3$, as in \eqref{eq:relation}, we  find
$$ W_{\SSS}(S) \,+\, W_{\SSS}(S') \,\,= \,\,
\frac{a_{13|2} a_{24|3}}{p_2 p_3 a_{23}} \, + 
\frac{a_{13|2} a_{24|3}}{p_{23} a_{23}} \,\, = \,\,
\frac{a_{13|2} a_{24|3} a_{23}}{p_2p_{23}  p_3 }.
$$
This explains how the
six terms in \eqref{eq:x41} become the five terms of $x_{14}$ shown in \eqref{eq:Xmatrix}.
Namely, the weight of  the Catalan path in  Figure \ref{fig:Catalan} is
 the sum of the fourth and fifth terms in \eqref{eq:x41}.
\end{example}

\begin{proof}[Proof of Proposition \ref{schroder_to_catalan_sum}]
Let $C$ be a Catalan path with
 $m$ local minima. It  has
$m+1$ local maxima.
Let $A_1, \dots,A_{m}$ and $A'_1, \dots, A'_{m+1}$ denote the variables
at the local minima and maxima, respectively.
Let $P_1, \dots, P_{m}$ and $P'_1,\dots, P'_{m+1}$ denote the face variables
located directly above the minima and directly below the maxima, respectively.
Then 
\begin{equation}\label{star}
W_{\C}(C) \,\,= \,\,\frac{A_1 \cdots A_{m} A'_1 \cdots A'_{m+1}}
{P_1\cdots P_{m} P'_1 \cdots  P'_{m+1}}.
\end{equation}
We also denote the face variables located directly below the local minima
by $P''_1 ,\dots ,P''_{m}$.

There are $2^{m}$ Schr\"{o}der paths that project to $C$ via $\pi$. These
correspond to the $2^{m}$ choices of either  preserving a local minimum,
or replacing it by a horizontal edge.
We denote the Schr\"oder paths in $\pi^{-1}(C)$ by 
$S_{d_1 d_2 \dots d_{m}}$, where $d_i = 0$ 
if the local minimum at $A_i$ was preserved and
$d_i = 1$ if it was replaced by a horizontal edge.
By Lemma \ref{KPlemma}, we have 
$$ \begin{matrix}
W_{\SSS}(S_{0d_2 \cdots d_m}) + W_{\SSS}(S_{1 d_2 \cdots d_m})
& =  & \frac{A_1^2}{P_1 P''_1} W_{\SSS}(S_{0d_2 \cdots d_m}), \qquad \\
W_{\SSS}(S_{d_1 0 d_3 \cdots d_m}) + W_{\SSS}(S_{d_1 1 d_3 \cdots d_m})
& = & \frac{A_2^2}{P_2 P''_2} W_{\SSS}(S_{d_1 0 d_3\cdots d_m}), \ldots
\end{matrix}
$$
By aggregating these identities, we obtain 
$$
\sum_{S \in \pi^{-1}(C)} \!\! \, W_{\SSS}(S)  \,\, = \,\,
\sum_{d_1,\ldots,d_m \in \{0,1\}} \!\!\!\! W_{\SSS}( S_{d_1 d_2 \cdots d_m}) \,\,=\,\,
\frac{A_1^2 A_2^2 \cdots A_{m}^2}{P_1 P''_1 P_2 P''_2 \cdots P_m P''_m}
W_{\SSS}(S_{0 0 \dots 0}).$$
But, now it follows from \eqref{sweight} and \eqref{star} that 
$$W_{\SSS}(S_{0 0 \dots 0}) \,\,= \,\,\frac{A'_1 \cdots A'_{m+1} P''_1 \cdots P''_m}
{P'_1\cdots P'_{m+1} A_1 \cdots A_{m}} \,\,\, = \,\,\,
\frac{P_1 P''_1 P_2 P''_2 \cdots P_m P''_m} {A_1^2 A_2^2 \cdots A_{m}^2} W_{\C}(C). 
$$
Therefore 
the sum of the weights of the Schr\"oder paths in $\pi^{-1}(C)$ is equal to 
$W_{\C}(C)$.
\end{proof}

\begin{remark} \label{rem:primeideal} \rm
The expression in Theorem \ref{thm:main} is not the only way to 
express the entries of a symmetric matrix in terms of the 
$\binom{n}{2} + \binom{n-2}{2} + n$
connected almost-principal and principal minors.
The ideal of polynomial relations among these minors is generated by 
the $\binom{n-2}{2}$ quadrics in \eqref{eq:relation}.
Indeed, Theorem~\ref{thm:main} ensures that the algebra generated
by these minors has dimension $\binom{n+1}{2}$, so their relation ideal 
has codimension $\binom{n-2}{2} = \binom{n}{2} + \binom{n-2}{2} + n - \binom{n+1}{2}$.
The $\binom{n-2}{2}$ relations \eqref{eq:relation} lie in that ideal and they generate a
complete intersection. That complete intersection is a prime ideal because
 none of the $a_{ij|I}$ lie in the subalgebra generated by
the  principal minors.
For instance, for $n=4$, our prime ideal  is principal. It is
$\langle a_{23}^2-p_2 p_3-p_{23} \rangle$.
\end{remark}

\section{Parametrizing Correlation Matrices}
\label{sec5}

We now specialize to real symmetric $n \times n$ matrices
that are positive definite and have all diagonal entries equal to $1$.
Such matrices are known as {\em correlation matrices}.
They play an important role in statistics, notably in the study of
multivariate normal distributions.
The set $\mathcal{E}_n$ of all $n \times n$ correlation matrices 
 is an open convex set of dimension $\binom{n}{2}$.
Its closure is a convex body, known in optimization theory \cite{frgbook, LP}
under the name {\em elliptope}.

In certain statistical applications it is desirable to 
generate random correlation matrices. Specifically,
one wishes to sample from the uniform distribution on the
elliptope $\mathcal{E}_n$. A solution to this problem was given
by Joe \cite{MR2301633} and further refined
by  Lewandowski {\it et al.}~\cite{MR2543081}.
The underlying geometric idea is to construct a parametrization from the standard cube:
$$ \Psi: (-1,1)^{\binom{n}{2}} \rightarrow \mathcal{E}_n. $$
The papers \cite{MR2301633, MR2543081} describe such 
maps $\Psi$ that are algebraic and bijective, so they identify the open cube
with the open elliptope. However, the construction is recursive.
In what follows we revisit the formula in \cite{MR2301633}
and we make it completely explicit.  Remarkably, it is precisely the
restriction of our Laurent polynomial parametrization
in Theorem \ref{thm:main} to the region where
all connected principal minors $p_I$ are positive and $p_1 = \cdots = p_n = 1$.

Let $X = (x_{ij})$ be a real symmetric $n \times n$ matrix.
We assume that $X$ is positive definite, i.e.~all 
principal minors $p_I$ are strictly positive. In statistics,
such an $X$ serves as the covariance matrix
of a normal distribution on $\R^n$, whose
 {\em partial correlations} are given by
 \begin{equation}
 \label{eq:rho} \qquad
\rho_{ij|I}  \,\, = \,\,  \frac{(-1)^{\lceil\, |I|/2 \,\rceil} \cdot a_{ij|I}}{\sqrt{p_{iI} \cdot p_{jI}}}
\qquad
\hbox{where} \,\,\, i,j \not \in I \,\, \hbox{and} \,\, i < j .
\end{equation}
For $I = \emptyset$, we obtain the $\binom{n}{2}$ entries
of the correlation matrix $Y = (y_{ij})$, namely
$$ y_{ij} \,=\,\rho_{ij} \,=\, \frac{a_{ij}}{\sqrt{ p_i p_j}} \, = \,
\frac{x_{ij}}{\sqrt{x_{ii} x_{jj}}} \qquad \hbox{for} \,\,1 \leq i < j \leq n.$$
The partial correlation  $\rho_{ij|I} $ in \eqref{eq:rho} is called {\em connected} if 
$I = \{i{+}1,i{+}2,\dots,j{-}2,j{-}1\}$.

\begin{theorem} \label{thm:correl}
The $\binom{n}{2}$ entries $y_{ij}$ of a correlation matrix 
can be written uniquely in terms of the $\binom{n}{2}$ connected
partial correlations $\rho_{ij|I}$.
Explicit formulas are derived from those in Theorem \ref{thm:main}
by first replacing each occurrence of a parameter $\,a_{ij|I}\,$ by 
$\, (-1)^{\lceil\, |I|/2 \,\rceil}\rho_{ij|I} \sqrt{p_{iI}p_{jI}}\,$ and thereafter  replacing each occurrence of a
 parameter $\,p_{r,r+1,\ldots,s}\,$ by the 
product of the $\binom{s-r+1}{2}$ expressions 
$\,(-1)^{\lfloor\, |I|/2\, \rfloor} (1-\rho_{ij|I}^2)\,$ where 
$r {\leq} i {<} j {\leq} s$ and $\,I = \{i{+}1,i{+}2,\ldots,j{-}1\}$.
The resulting map
$\Psi: (\rho_{ij|I}) \mapsto (y_{ij})$
is a bjection between $(-1,1)^{\binom{n}{2}}$ and $\,\mathcal{E}_n$.
\end{theorem}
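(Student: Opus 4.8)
The plan is to derive Theorem \ref{thm:correl} as a direct change of variables applied to the Laurent parametrization already established in Theorem \ref{thm:main}. The starting point is the observation that a symmetric positive definite matrix $X$ is determined, up to scaling of rows and columns, by its correlation matrix $Y$: replacing $X$ by $D^{-1/2} X D^{-1/2}$ with $D = \mathrm{diag}(x_{11},\dots,x_{nn})$ sets all diagonal entries to $1$ while preserving all partial correlations $\rho_{ij|I}$. Thus I may as well assume $p_k = x_{kk} = 1$ for all $k$, and the free parameters become the connected partial correlations. First I would invert the definitions \eqref{eq:rho}: solving for $a_{ij|I}$ gives $a_{ij|I} = (-1)^{\lceil |I|/2\rceil} \rho_{ij|I}\sqrt{p_{iI}p_{jI}}$, which is exactly the prescribed substitution. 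Next I would verify that the principal minors $p_{r,r+1,\ldots,s}$ factor as the stated product of the $\binom{s-r+1}{2}$ terms $(-1)^{\lfloor |I|/2\rfloor}(1-\rho_{ij|I}^2)$; this is a standard consequence of the recursive structure of partial correlations, and can be proved by induction on $s-r$ using the quadratic relation \eqref{eq:relation}, which rearranges to $p_I p_{I\cup\{i,j\}} = a_{ij|I}^2 - p_{I\cup\{i\}}p_{I\cup\{j\}}$ and hence, after dividing by $p_{iI}p_{jI}$, expresses $p_{I\cup\{i,j\}}/(\text{lower minors})$ in terms of $1-\rho_{ij|I}^2$.

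With these two substitutions verified, the explicit formula for each $y_{ij}$ follows by mechanically applying the change of variables to the Laurent polynomial for $x_{ij}$ furnished by Theorem \ref{thm:main}, noting that on the diagonal-normalized locus $y_{ij} = x_{ij}$. The content that remains is the claim that the resulting map $\Psi \colon (-1,1)^{\binom{n}{2}} \to \mathcal{E}_n$ is a bijection. I would prove this by exhibiting the inverse and checking that it carries the cube onto the elliptope. In one direction, given $(\rho_{ij|I}) \in (-1,1)^{\binom{n}{2}}$, I would show that the matrix $X$ built from the substituted minors is positive definite: by \eqref{eq:relation} and the factorization above, every connected principal minor $p_{r,\ldots,s}$ is a product of positive quantities $1-\rho_{ij|I}^2 > 0$, and positivity of all leading principal minors is Sylvester's criterion for positive definiteness. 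Since the diagonal is normalized to $1$, this $X$ lies in $\mathcal{E}_n$.

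For the reverse direction I would run the construction backwards: starting from any $Y \in \mathcal{E}_n$, the nested principal minors of $Y$ are all strictly positive, so the defining formula \eqref{eq:rho} produces well-defined partial correlations, and the Cauchy--Schwarz inequality applied to the positive definite form guarantees $\rho_{ij|I}^2 < 1$, placing the image in the open cube. The two constructions are mutually inverse because the substitutions in Theorem \ref{thm:correl} are algebraic inverses of \eqref{eq:rho} together with the minor factorization, and the uniqueness asserted in the theorem is precisely the injectivity of $\Psi$, which follows from Remark \ref{rem:primeideal}: the connected minors generate a polynomial algebra of the full dimension $\binom{n}{2}$ on the normalized locus, so no two distinct parameter values can yield the same matrix.

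The main obstacle I anticipate is the bookkeeping in the minor factorization step, namely proving cleanly that $p_{r,\ldots,s}$ equals the product of the $\binom{s-r+1}{2}$ factors $(-1)^{\lfloor |I|/2\rfloor}(1-\rho_{ij|I}^2)$ with the signs working out consistently. Tracking the signs $(-1)^{\lfloor |I|/2\rfloor}$ and $(-1)^{\lceil |I|/2\rceil}$ through the induction, and confirming that they conspire so that each $p_I$ is genuinely a \emph{positive} multiple of the correlation factors, is the delicate part; the relation \eqref{eq:relation} must be applied with exactly the right index set $I = \{i{+}1,\ldots,j{-}1\}$ at each stage, and an off-by-one error in the nesting of the $I$'s would break both the formula and the positivity argument. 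Everything else is a routine translation between the two coordinate systems once this identity is secured.
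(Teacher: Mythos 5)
Your overall route is the same as the paper's: invert \eqref{eq:rho} to get the substitution for $a_{ij|I}$, factor the connected principal minors by recursively applying \eqref{eq:relation} (the paper uses it in the rearranged form \eqref{HermitianRelation}), and observe that $\Psi$ is inverted by reading off the connected partial correlations of a correlation matrix via \eqref{eq:rho}. The one place you genuinely diverge is the claim that $\Psi$ maps the whole open cube \emph{into} $\mathcal{E}_n$: the paper outsources this to Joe \cite{MR2301633} (``the connected partial correlations can vary freely''), while you propose a self-contained argument via Sylvester's criterion.

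That self-contained step has a gap as written. You apply Sylvester's criterion to the quantities $\prod(-1)^{\lfloor |I|/2\rfloor}(1-\rho_{ij|I}^2)$, i.e.\ to the values you \emph{substituted} for the symbols $p_{r,\ldots,s}$, not to the actual leading principal minors of the matrix $X$ you constructed. Theorem \ref{thm:main} says that a symmetric matrix is recovered from \emph{its own} connected minors; it does not by itself guarantee that plugging arbitrary cube values into the Laurent formulas produces a matrix whose minors equal those values. That is precisely the nontrivial content of ``the connected partial correlations vary freely,'' and assuming it makes the positive-definiteness argument circular. It can be repaired --- e.g.\ by the dimension/birationality considerations of Remark \ref{rem:primeideal} upgraded to an identity of rational functions valid on the whole locus where the $1-\rho_{ij|I}^2$ are nonzero, or by Joe's induction on $n$ extending a correlation matrix one row at a time --- but some such argument must be supplied. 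Two smaller points: your statement that ``every connected principal minor $p_{r,\ldots,s}$ is a product of positive quantities'' conflates the signed minor $p_I=(-1)^{\lfloor |I|/2\rfloor}X_I^I$ with the unsigned minor $X_I^I$ (only the latter is positive), though you do flag the sign bookkeeping as delicate; and the appeal to Remark \ref{rem:primeideal} for injectivity is unnecessary once $\Psi\circ P=\mathrm{id}_{\mathcal{E}_n}$ and $P\circ\Psi=\mathrm{id}$ on the cube are in place.
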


\begin{proof}
The replacement formula for $a_{ij|I}$ is
seen in \eqref{eq:rho}. The formula for the signed
principal minors
$\,p_{r,r+1,\ldots,s}\,$ in terms of connected partial correlations
is due to Joe  \cite[Theorem 1]{MR2301633}.
It can be derived by recursively applying the following version of 
\eqref{eq:relation} in concert with \eqref{eq:rho}:
\begin{align} \label{HermitianRelation}
p_{ijI} \,\,=\,\, \frac{a_{ij|I}^2-p_{iI}p_{jI}}{p_I}, \quad I=\{i+1,\ldots,j-1\}.
\end{align}
Our formulas give an algebraic map
$\Psi: (\rho_{ij|I}) \mapsto (y_{ij})$
between affine spaces of dimension $\binom{n}{2}$.
This map is invertible on  $\mathcal{E}_n$ because each partial correlation
$\rho_{ij|I}$ can be written via \eqref{eq:rho}
in terms of the entries $y_{ij}$ of the correlation matrix.
All partial correlations are real numbers strictly between $-1$ and $1$.
The connected partial correlations $\rho_{ij|I}$
 can vary freely, as explained in \cite[page 2179]{MR2301633}.
 From this, we get the desired bijection.
\end{proof}

We now illustrate our parametrization of correlation matrices
in the two smallest cases.

\begin{example}[$n=3$] \rm
We consider the open $3$-dimensional cube defined by the inequalities
$$ -1 \,\,< \,\, \rho_{12}, \,\rho_{23},\, \rho_{13|2} \,\,< \,\,1. $$
Our bijection $\Psi$ identifies each  point in this cube with a
 $3 \times 3$ correlation matrix:
$$ \!
 \begin{bmatrix}
1 & y_{12} & y_{13} \\
y_{12} & 1 & y_{23} \\
y_{13} & y_{23} & 1 \end{bmatrix}  = \,
\begin{bmatrix}
1 & \rho_{12} & \rho_{12} \rho_{23} - \rho_{13|2} (1{-} \rho_{12}^2)^{\frac{1}{2}}
(1{-}\rho_{23}^2)^{\frac{1}{2}} \\
\,\rho_{12} \,& 1 & \rho_{23} \\
 \rho_{12} \rho_{23} - \rho_{13|2} (1{-} \rho_{12}^2)^{\frac{1}{2}} (1{-}\rho_{23}^2)^{\frac{1}{2}} 
& \rho_{23} & 1 
\end{bmatrix}.
$$
One checks that this matrix is positive definite, and,
as in \cite[Theorem 1]{MR2301633},
its determinant 
$$ {\rm det}(Y) \,\, = \,\,
(1- \rho_{12}^2)(1- \rho_{23}^2)(1-\rho_{13|2}^2) $$
defines the facets of the cube.
It is instructive to draw how the boundary of the cube maps onto
 the boundary of the elliptope $\mathcal{E}_3$.
The latter is depicted in \cite[Figure 5.8, page 232]{frgbook}.
\end{example}

The combinatorics of our planar graph $G_n$ and its Catalan paths
can be seen in a different guise in  \cite{MR2301633, MR2543081}.
These correspond to the structures called {\em D-vines} in these papers.
Figure \ref{fig:Dvine} shows the standard D-vine for $n=4$. Its edges
are naturally labeled with the six coordinates of the cube, namely
$\rho_{12},\rho_{23},\rho_{34},\rho_{13|2},\rho_{24|3},\rho_{14|23}$.
These correspond to the six almost-principal minors 
$a_{ij|I}$ in the labeled
graph $G_4$ in Figure \ref{fig:Catalan}.

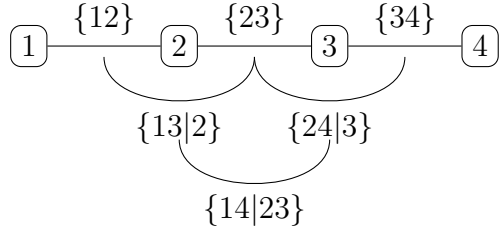
\begin{figure}[H]
\begin{tikzpicture}
\node (1) at (0,2)[rectangle,rounded corners,draw,align=center]  {$1$};
\node (2) at (2,2)[rectangle,rounded corners,draw,align=center]  {$2$};
\node (3) at (4,2)[rectangle,rounded corners,draw,align=center]  {$3$};
\node (4) at (6,2)[rectangle,rounded corners,draw,align=center]  {$4$};
\node (5) at (2,0.9)[rectangle] {};
\node (6) at (4,0.9)[rectangle]  {};
\node (7) at (1,2)[rectangle] {};
\node (8) at (3,2)[rectangle]  {};
\node (9) at (5,2)[rectangle]  {};

\path[]
(1) edge node[above] {$\{12\}$} (2)
(2) edge node[above] {$\{23\}$} (3)
(3) edge node[above] {$\{34\}$} (4)
(7) edge [bend right=90]  node[below] {$\{13|2\}$} (8)
(8) edge [bend right=90]  node[below] {$\{24|3\}$} (9)
(5) edge [bend right=90]  node[below] {$\{14|23\}$} (6);
\end{tikzpicture} 
\caption{\label{fig:Dvine}
The standard D-vine for four random variables.}
\end{figure}

\begin{example}[$n=4$] \rm
The $4 {\times} 4$ correlation matrix $Y$ is obtained from the matrix $X$
in~\eqref{eq:Xmatrix} by  
performing the replacements that are described in Theorem \ref{thm:correl}. 
We first substitute
$$ 
\begin{matrix}
 a_{12} = \rho_{12} \sqrt{p_1 p_2}, & 
 a_{23} = \rho_{23} \sqrt{p_2 p_3}, &
 a_{34} = \rho_{34} \sqrt{p_3 p_4}, \\
a_{13|2}= - \rho_{13|2} \sqrt{p_{12} p_{23}}, & 
a_{24|3} = - \rho_{24|3} \sqrt{p_{23} p_{34}}, &
 a_{14|23}= - \rho_{14|23} \sqrt{p_{123} p_{234}},
 \end{matrix}
$$
and then we eliminate the connected principal minors as follows:
$$ \begin{matrix}
 p_1 = 1,\,
  p_2 = 1,\,
  p_3 = 1,\,
  p_4 = 1,\,
 p_{12} = -(1-\rho_{12}^2),\,
 p_{23} = -(1-\rho_{23}^2), \,
 p_{34} = -(1-\rho_{34}^2), \\
p_{123} = -(1-\rho_{12}^2)(1-\rho_{23}^2)(1-\rho_{13|2}^2) \quad {\rm and} \quad
p_{234} = -(1-\rho_{23}^2)(1-\rho_{34}^2)(1-\rho_{24|3}^2).
\end{matrix}
$$
This results in the formulas for the six entries of $Y$ in terms of
$\rho_{12},\rho_{23},\rho_{34},\rho_{13|2},\rho_{24|3},\rho_{14|23}$.
These give the bijection $\Psi$ between the cube and
the elliptope, both of dimension six.
It is instructive to verify that ${\rm det}(Y)$ is the product of
the facet-defining expressions $(1-\rho_\bullet)^2$.
\end{example}

The paper \cite{MR2543081} argues that C-vines are better than 
D-vines when it comes to sampling from the elliptope $\mathcal{E}_n$.
It would be interesting to examine both C-vines and D-vines
from the network perspective of \cite{KP2} and to explore
whether Catalan-type formulas for $X$ can be derived from these as well.
Could such vines play a role in the theory of cluster algebras?

\bigskip

\noindent {\bf Acknowledgements}.
We thank Richard Kenyon, Robin Pemantle and Caroline Uhler
for helpful conversations. This project was supported
by the National Science Foundation 
(DMS-1419018, DGE-1106400, DMS-1049513).

\bigskip

\bibliographystyle{alpha}

\bigskip
\bigskip

\footnotesize 
\noindent {\bf Authors' address:}

\smallskip

Department of Mathematics, University of California, 
Berkeley, CA 94720-3840, USA

{\tt $\{$bernd,e.tsukerman,williams$\}$@math.berkeley.edu}

\end{document}